\documentclass[11pt]{amsart}
\usepackage{amsfonts}
\usepackage{mathrsfs}
\usepackage{graphicx}
\usepackage{verbatim}
\usepackage[abs]{overpic}
\usepackage{amsmath,amsthm,amssymb}
\usepackage{geometry}
\geometry{left=3cm,right=3cm,top=2.5cm,bottom=2.5cm}

\newtheorem{theorem}{Theorem}[section]
\newtheorem{lemma}[theorem]{Lemma}

\newtheorem{proposition}[theorem]{Proposition}

\theoremstyle{definition}
\newtheorem{definition}[theorem]{Definition}

\theoremstyle{remark}
\newtheorem{remark}[theorem]{Remark}

\newtheorem{question}[theorem]{Question}
\newtheorem{construction}[theorem]{Construction}
\numberwithin{equation}{section}

\author{James Conway}
\address{Department of Mathematics \\ University of California, Berkeley}
\email{conway@berkeley.edu}

\author{Amey Kaloti}
\address{Department of Mathematics \\ University of Iowa}
\email{amey-kaloti@uiowa.edu}

\author{Dheeraj Kulkarni}
\address{Department of Mathematics \\ RKM Vivekananda University}
\email{dheeraj.kulkarni@gmail.com}



\begin{document}
\title[Planar Open Books and the Contact Invariant]{Tight Planar Contact Manifolds with Vanishing Heegaard Floer Contact Invariants}
\maketitle

\begin{abstract}
In this note, we exhibit infinite families of tight non-fillable contact manifolds supported by planar open books with vanishing Heegaard Floer contact invariants. Moreover, we also exhibit an infinite such family where the supported manifold is hyperbolic.
\end{abstract}


\section{Introduction}
Many techniques have been developed to determine whether a given contact $3$-manifold is tight.  As each one was developed, the question arose as to whether a given property is equivalent to tightness.  Fillability was the first widely-used tool to prove tightness, but tight contact manifolds were found by Etnyre and Honda that were not fillable \cite{Etnyre_Honda_nonfillable}.  Ozsv\'{a}th and Szab\'{o} then developed Heegaard Floer theory, which very promisingly could prove tightness in many cases where the manifold was not fillable.  However, many tight contact manifolds with vanishing Heegaard Floer contact invariant have been discovered, see \cite{Ghiggini2,Ghiggini1}.

On the side of positive results, Honda, Kazez, and Mati\'{c} \cite{HKM_contact_invariant} proved that for contact manifolds supported by open books with pages of genus one and connected binding, tightness is equivalent to the non-vanishing of the Heegaard Floer contact invariant.  For contact manifolds supported by open books with planar pages, the first tight but non-fillable examples came from applying \cite[Corollary 1.7]{LS3} to examples in \cite{GLS,LS2}.  They proved tightness by showing that the Heegaard Floer contact invariant does not vanish.  The following question, however, remained.

\begin{question} For contact manifolds supported by open books with planar pages, is tightness equivalent to the non-vanishing of the Heegaard Floer contact invariant? \end{question}

We provide a negative answer to this question.  In particular, we construct infinite families of tight contact manifolds supported by planar open books where capping off a binding component recovers a given overtwisted manifold.  Since capping off a binding component of an open book is equivalent to admissible transverse surgery on a binding component, we have the following.

\begin{theorem}
\label{Th:Main_Theorem}
Given any overtwisted contact manifold $(M,\xi)$, there exist infinite families of tight, planar, non-fillable contact manifolds with vanishing Heegaard Floer contact invariant, on which admissible transverse surgery on a link in these manifolds recovers $(M,\xi)$.
\end{theorem}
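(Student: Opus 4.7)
My plan is to construct the required family by modifying a planar open book for the overtwisted target manifold. By Etnyre's theorem, $(M, \xi)$ admits a planar open book $(\Sigma, \phi)$. I drill out an interior disk $D \subset \Sigma$ to obtain a new planar surface $\Sigma'$ with an additional boundary component $c$, and consider the family of monodromies
\[
\phi_n' \;=\; \phi \circ \tau_\delta^n, \qquad n \in \mathbb{Z}_{>0},
\]
where $\delta \subset \Sigma'$ is a simple closed curve parallel to $c$ and $\tau_\delta$ is the positive Dehn twist along $\delta$. Since $\tau_\delta$ becomes isotopic to the identity once the binding component corresponding to $c$ is capped off, each $(\Sigma', \phi_n')$ caps off to $(\Sigma, \phi)$. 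By the capping-off equals admissible-transverse-surgery correspondence recalled in the introduction, $(M, \xi)$ is then recovered from the contact manifold $(Y_n, \xi_n)$ supported by $(\Sigma', \phi_n')$ via admissible transverse surgery on the transverse knot corresponding to $c$.

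Infinitude of the family up to contactomorphism will follow from distinctness of the underlying three-manifolds $Y_n$; indeed, varying $n$ changes the framing of an essentially fixed surgery on a transverse knot, so (for example) $|H_1(Y_n)|$ will grow without bound. Planarity is built into the construction, and non-fillability of each $(Y_n, \xi_n)$ is immediate from vanishing of its Heegaard Floer contact invariant by the Ozsv\'ath--Szab\'o theorem that a weakly symplectically fillable contact structure has non-zero contact invariant.

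The main technical obstacle is to prove that $(Y_n, \xi_n)$ is simultaneously tight and has vanishing untwisted Heegaard Floer contact invariant. For the vanishing, I plan to use the naturality of Baldwin's capping-off map, which sends $c(\xi_n)$ to $c(\xi) = 0$, together with an argument specific to the monodromies $\phi_n'$ forcing $c(\xi_n)$ itself to be zero rather than only lying in the kernel of this map. Proving tightness despite vanishing of the untwisted invariant is the hardest step; I would address it by establishing non-vanishing of a twisted-coefficient contact invariant, in the spirit of Ghiggini's tight non-fillable examples, supplemented by a careful arc-based analysis of $\phi_n'$ ruling out the existence of any overtwisted disk.
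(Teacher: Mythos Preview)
Your proposal has a genuine gap at the tightness step, and in fact your construction will typically produce overtwisted contact manifolds rather than tight ones. Since $(\Sigma,\phi)$ supports an overtwisted contact structure, by Honda--Kazez--Mati\'c the monodromy $\phi$ is not right-veering: there is a boundary component $B$ of $\Sigma$ and a properly embedded arc $\alpha$ with an endpoint on $B$ such that $\phi(\alpha)$ lies to the left of $\alpha$ near $B$. Choose the drilled disk $D$ disjoint from $\alpha\cup\phi(\alpha)$; then $\alpha$ is still an arc in $\Sigma'$, and since $\delta$ is disjoint from $\alpha$ we have $\phi_n'(\alpha)=\tau_\delta^n\big(\phi(\alpha)\big)=\phi(\alpha)$. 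Hence $\phi_n'$ is not right-veering at $B$, and $(\Sigma',\phi_n')$ supports an overtwisted contact structure for every $n$. Your suggested remedies (twisted coefficients, ad hoc arc analysis) cannot repair this: the problem is not one of detection but that the construction leaves the fractional Dehn twist coefficients at the \emph{original} boundary components of $\Sigma$ untouched.

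The paper avoids this by modifying \emph{every} boundary component of $\Sigma$: each boundary circle $B$ is replaced by $m\ge 2$ new boundary circles $B_1',\dots,B_m'$, and the monodromy is adjusted by $\tau_B^{-n_m}\tau_{B_1'}^{n_1}\cdots\tau_{B_m'}^{n_m}$ with all $n_i\ge 2$. On the attached subsurface the monodromy is (freely isotopic to) a product of boundary twists, so the fractional Dehn twist coefficient at each new boundary component equals $n_i\ge 2$. Since this is done at every boundary component of the original page, every boundary component of the new planar page has coefficient $>1$, and Ito--Kawamuro's criterion gives tightness directly. As a secondary point, you are overcomplicating the vanishing argument: Baldwin's theorem states that non-vanishing of the contact invariant is preserved under capping off, so its contrapositive immediately gives $c(\xi_n)=0$ from $c(\xi)=0$; no further analysis of the kernel of the capping-off map is needed.
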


We hope that these examples will be useful in further exploring characterizations of tightness, and in particular will provide computable examples of boundary cases.  Indeed, these examples have already been used to investigate possible new invariants of contact structures coming from Heegaard Floer Homology being developed by Kutluhan, Mati\'{c}, Van Horn-Morris, and Wand \cite{KMVHMW2} and Baldwin and Vela-Vick \cite{BVV}.  These invariants are interesting when the Heegaard Floer contact invariant vanishes, as in the examples from Theorem~\ref{Th:Main_Theorem}.

The only infinite family of tight, non-fillable contact manifolds that are hyperbolic has been produced by Baldwin and Etnyre \cite{Baldwin_Etnyre_transverse}.  Their examples are supported by open books with pages of genus one.  In addition, their construction requires throwing out a finite number of unspecified members of their infinite family that may not be hyperbolic.  We produce an infinite family of such manifolds with planar supporting open books, all of whom are hyperbolic.  To this end, let $S$ denote the surface shown in Figure~\ref{fig:Planar_Surface_With_the_Curves_Drawn}. Let $\mathbf{v} = (p, n_1, n_2, n_3, n_4)$ be a $5$-tuple of integers, and let  $\phi_\mathbf{v}= \tau_{\alpha}^{-n_1-1} \tau_{\beta}^p \tau_{B_1}^{n_1} \tau_{B_2}^{n_2} \tau_{B_3}^{n_3} \tau_{B_4}^{n_4}$ be a diffeomorphism of $S$.  We show the following.

\begin{figure}[htb]
\begin{overpic}
{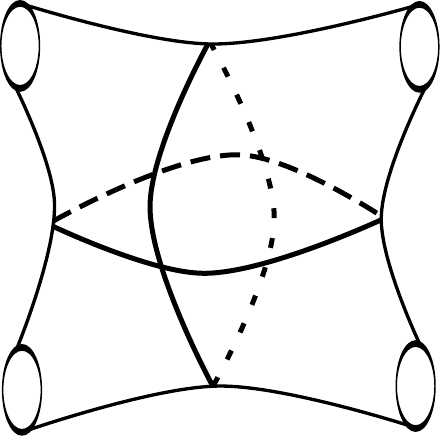}
\put(160, 80){$\alpha$}
\put(70,160){$\beta$}
\put(-14,190){$B_1$}
\put(214,190){$B_2$}
\put(-14,20){$B_4$}
\put(212,20){$B_3$}
\end{overpic}
\caption{Surface $S$ used in constructing $(S,\phi_\mathbf{v})$, with $\alpha$ and $\beta$ curves indicated.}
\label{fig:Planar_Surface_With_the_Curves_Drawn}
\end{figure}

\begin{theorem}
\label{Th:hyperbolic_examples}
Let $(M_\mathbf{v},\xi_\mathbf{v})$ be the contact manifold supported by the open book $(S,\phi_\mathbf{v})$. Then $(M_{\mathbf{v}},\xi_\mathbf{v})$ is universally tight, not fillable, has vanishing Heegaard Floer contact invariant, and is hyperbolic, for $p \geq 1$, and $n_i \geq 6$ for each $i$.
\end{theorem}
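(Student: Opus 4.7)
The plan is to establish each of the four claimed properties---universal tightness, non-fillability, vanishing Heegaard Floer contact invariant, and hyperbolicity---separately. The first three properties I would obtain by placing $(S,\phi_\mathbf{v})$ into the general framework of Theorem~\ref{Th:Main_Theorem}; the fourth requires genuinely new geometric input.

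For the vanishing of $c(\xi_\mathbf{v})$ and non-fillability, I would cap off the binding component corresponding to $B_1$. On the capped surface the curve $B_1$ becomes nullhomotopic, so $\tau_{B_1}^{n_1}$ drops out and the reduced monodromy becomes $\tau_\alpha^{-n_1-1}\tau_\beta^p \tau_{B_2}^{n_2}\tau_{B_3}^{n_3}\tau_{B_4}^{n_4}$. The exponent $-n_1-1$ is arranged precisely so that the remaining negative Dehn-twist power on $\alpha$ is no longer compensated by $\tau_{B_1}^{n_1}$, and the capped open book supports an overtwisted contact structure. Since capping off a binding component is equivalent to admissible transverse surgery on that component, the functoriality argument used for Theorem~\ref{Th:Main_Theorem} then forces $c(\xi_\mathbf{v})=0$; non-fillability follows from the Ozsv\'ath-Szab\'o obstruction to strong symplectic fillings. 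Tightness and universal tightness I would verify in the same manner as for the general family of Theorem~\ref{Th:Main_Theorem}---most likely by exhibiting $\xi_\mathbf{v}$ as an Eliashberg-Thurston perturbation of a taut foliation on $M_\mathbf{v}$ whose pullback to any cover remains taut, or alternatively by a direct state-traversal argument on a convex decomposition coming from the open book.

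For hyperbolicity the plan has three stages. First, I would show that $\phi_\mathbf{v}$ is pseudo-Anosov: the curves $\alpha$ and $\beta$ are arranged to fill the interior of $S$, so Thurston's construction (extended by Penner) implies that $\tau_\alpha^{-n_1-1}\tau_\beta^{p}$ is pseudo-Anosov for $p\geq 1$; the boundary-parallel twists $\tau_{B_i}^{n_i}$ act trivially on the interior of $S$ and do not disturb this. Second, Thurston's hyperbolization theorem for surface bundles then implies that the mapping torus of $\phi_\mathbf{v}$---which is the binding complement of $M_\mathbf{v}$---admits a finite-volume complete hyperbolic structure. Third, $M_\mathbf{v}$ is obtained from this mapping torus by Dehn-filling each cusp along the meridian of the corresponding binding component; the exponent $n_i$ governs the slope length of the filling meridian in the $i$th cusp torus of the hyperbolic structure. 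The $6$-theorem of Agol and Lackenby then guarantees that the filled manifold remains hyperbolic once each filling slope has length exceeding $6$, and the hypothesis $n_i\geq 6$ is exactly what is needed to enforce this length bound.

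The hard part will be stage three of the hyperbolicity argument: converting the combinatorial condition $n_i\geq 6$ into the geometric bound required by the $6$-theorem. This demands quantitative control over how boundary-parallel Dehn-twist powers in the monodromy affect cusp slope lengths in the hyperbolic structure on the mapping torus, along the lines of work of Futer-Kalfagianni-Purcell on fibered hyperbolic manifolds. Establishing uniform estimates valid for all $\mathbf{v}$ with $n_i\geq 6$ and $p\geq 1$---rather than merely ``all but finitely many fillings''---will be the main technical hurdle.
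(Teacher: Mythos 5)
Your proposal contains two genuine problems, one small and one large.

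The small problem is the choice of binding component to cap off. You cap off $B_1$, removing $\tau_{B_1}^{n_1}$ so that the capped monodromy is $\tau_\alpha^{-n_1-1}\tau_\beta^p\tau_{B_2}^{n_2}\tau_{B_3}^{n_3}\tau_{B_4}^{n_4}$, and you assert this is overtwisted because the negative $\alpha$-twist is "no longer compensated." But after filling in the disc along $B_1$, the curve $\alpha$ (which encircles $B_1$ and $B_2$ in $S$) becomes isotopic to $B_2$ on the capped surface, so the total twisting near that boundary is $\tau_{B_2}^{\,n_2-n_1-1}$. This is a negative twist only when $n_2\leq n_1$; for, say, $n_1=6$, $n_2=8$ it is a positive twist and your argument gives nothing. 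The paper instead caps off $B_2$: then $\alpha$ becomes isotopic to $B_1$ and $\tau_\alpha^{-n_1-1}\tau_{B_1}^{n_1}=\tau_{B_1}^{-1}$ is a negative Dehn twist regardless of $\mathbf{v}$, so the capped monodromy is not right-veering and the supported contact structure is overtwisted for every allowed $\mathbf{v}$. With that corrected, the rest of your argument for $c(\xi_\mathbf{v})=0$ via Theorem~\ref{HF_capping_off} and non-fillability via Theorem~\ref{Niederkruger_Wendl} matches the paper.

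The large problem is the hyperbolicity argument. You propose pseudo-Anosov monodromy (correct, and the same Penner argument the paper uses), then Thurston hyperbolization of the mapping torus, then the $6$-theorem of Agol--Lackenby to control the Dehn fillings along the binding, and you candidly flag the conversion of $n_i\geq 6$ into a slope-length bound as the ``main technical hurdle.'' That hurdle is not incidental---it is exactly the difficulty that forced Baldwin--Etnyre to discard an unspecified finite subfamily of their examples, as the introduction points out. The paper avoids the issue entirely by invoking Ito--Kawamuro's Theorem~\ref{Ito_Kawamuro_hyperbolic}: once $c(\phi_\mathbf{v},B_i)>4$ for every boundary component (which follows from the Kazez--Roberts estimate Theorem~\ref{Kazez_Roberts_estimate} applied to the arcs $\gamma_1,\gamma_2$, giving $c(\phi_\mathbf{v},B_i)\geq n_i-1\geq 5$), the manifold is hyperbolic if and only if the monodromy is pseudo-Anosov. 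No hyperbolic geometry, no slope estimates. You would need to either carry out the quantitative cusp-geometry analysis you sketch---a substantial project in its own right---or switch to the Ito--Kawamuro route.

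Finally, your approach to tightness via Eliashberg--Thurston perturbations of taut foliations, or via convex decompositions, is also not what the paper (or Theorem~\ref{Th:Main_Theorem}) uses: tightness follows from Ito--Kawamuro's Theorem~\ref{FDTC > 1} once $c(\phi_\mathbf{v},B_i)>1$, and universal tightness from Colin--Honda's Theorem~\ref{Colin_Honda_universally_tight} once the monodromy is pseudo-Anosov with $c(\phi_\mathbf{v},B_i)\geq 2$. Both follow at once from the same FDTC estimate used for hyperbolicity, so the paper's proof is a single computation reused three times---much more economical than the routes you propose.
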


This paper is organized as follows. In Section~\ref{Section:Background} we recall the definitions and properties of open books, fillability, the Heegaard Floer contact invariant, and transverse surgery. We prove our results in Section~\ref{Section:Proofs}.

\textbf{Acknowledgements:} The authors would like to thank John Etnyre for helpful conversations. The first author was supported in part by NSF grant DMS-1309073.  The second author was supported in part by NSF grant DMS-0804820.  The third author was supported by Indo-US Virtual Institute For Mathematical and Statistical Sciences (VIMSS) for visiting Georgia Tech; he would also like to thank the School of Mathematics at Georgia Tech for their hospitality during his visit.


\section{Background}
\label{Section:Background}
In this section we recall basic notions from contact geometry. See~\cite{Geiges_Contact_Geometry_Book} for an overview of the basics of contact geometry.

The fundamental dichotomy of contact manifolds is between tight and overtwisted.  A contact manifold $(M,\xi)$ is called \textit{overtwisted} if there exists an embedded disc $D$ in $M$ such that $\partial D$ is tangent to the contact planes and that the framing given by the contact planes agrees with the framing given by the surface $D$. If $(M,\xi)$ is not overtwisted, we call it \textit{tight}.

\subsection{Open Book Decompositions}
An \textit{abstract open book decomposition} is a pair $(S,\phi)$ with the following properties.
\begin{itemize}
\item $S$ is an oriented compact surface with boundary called the \textit{page} of the open book decomposition.
\item $\phi: S \rightarrow S$ is a diffeomorphism of $S$ such that $\phi|_{\partial S}$ is the identity. The diffeomorphism $\phi$ is called the \textit{monodromy} of the open book decomposition.
\end{itemize}

We construct a 3-manifold $M_{(S,\phi)}$ from the mapping torus of $(S,\phi)$ in the following way:
\[ M_{(S,\phi)} = \frac{S \times [0,1]}{(x,1) \sim (\phi(x),0), (x \in \partial S, t) \sim (x, t')}. \]

The image of boundary $\partial S \times \{0\}$ is a fibered link in $M$, and is called the \textit{binding} of the open book decomposition.  By abuse of notation, we call the boundary of the surface $S$ the binding as well.

Given a $3$-manifold $M$, an \textit{open book decomposition for $M$} is a diffeomorphism of $M$ with $M_{(S,\phi)}$, for some $(S,\phi)$.  Given a contact structure $\xi$ on $M$ we say that the open book decomposition $(S,\phi)$ \textit{supports} $\xi$ if $\xi$ is isotopic to a contact structure with a defining $1$-form $\alpha$ such that:
\begin{itemize}
\item $\alpha > 0$ on the binding $\partial S$.
\item $d\alpha$ is a positive area form on the interior of each page $S \times \{t\}$ of the open book decomposition.
\end{itemize}
In the supported contact structure, the binding becomes a transverse link.  Every open book supports a unique contact structure up to isotopy, and every contact structure has a supporting open book.  The following foundational result of Giroux stated in~\cite{Giroux_Correspondence} forms the bedrock of the interaction between open book decompositions and contact geometry.

\begin{theorem}[Giroux \cite{Giroux_Correspondence}]
\label{Giroux 1-1}
Let $M$ be a closed oriented 3-manifold. Then there is a one-to-one correspondence between oriented contact structures on $M$ up to contactomorphism and open book decompositions of $M$ up to \textit{positive (de)stabilization}, \textit{ie.\ }plumbing the page with an annulus and adding a positive Dehn twist to the monodromy along the core of the annulus, and the reverse process.
\end{theorem}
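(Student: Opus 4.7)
My plan is to derive the four properties in parallel with the proof of Theorem~\ref{Th:Main_Theorem}, treating this explicit family $(S, \phi_\mathbf{v})$ as a concrete instance that additionally admits a hyperbolization argument. The pair $\tau_\alpha^{-n_1-1}\tau_{B_1}^{n_1}$ in the monodromy is the design feature that drives three of the conclusions: the negative twist along $\alpha$ should produce overtwistedness after capping off $B_1$, while the positive twists $\tau_\beta^p \tau_{B_j}^{n_j}$ for $j \neq 1$ keep the ambient contact structure tight.

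First I would analyze the open book obtained by capping off the binding component $B_1$: this turns $S$ into a surface $S'$ in which $\alpha$ becomes (isotopic to) a curve that is either trivial or boundary-parallel. By direct comparison of the capped-off monodromy with a standard overtwisted model, the resulting contact manifold should be overtwisted. Since capping off a binding component is admissible transverse surgery on a transverse knot, and since overtwisted manifolds have vanishing contact invariant, the naturality of the contact class under the surgery cobordism forces $c(M_\mathbf{v}, \xi_\mathbf{v}) = 0$. Non-fillability is then immediate from the Ozsv\'{a}th--Szab\'{o} theorem that any weak symplectic filling gives a nonzero contact invariant. To establish (universal) tightness I would choose a \emph{different} binding component to cap off (or a sequence of cap-offs) that produces a monodromy of all positive twists, hence a Stein-fillable contact manifold, and then invoke the fact (established in the proof of Theorem~\ref{Th:Main_Theorem}) that admissible transverse surgery preserves universal tightness in this planar context; the hypotheses $p \geq 1$ and $n_i \geq 6$ should ensure that the remaining positive twists after cap-off are abundant enough for the intermediate manifold to be Stein fillable.

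Finally, I must show hyperbolicity, which I expect to be the main obstacle. By Thurston's geometrization theorem it suffices to show that $M_\mathbf{v}$ is irreducible, atoroidal, and not Seifert fibered. The open book presentation translates into a Dehn surgery description of $M_\mathbf{v}$ on a link $L \subset S^3$ built from the six curves $\alpha$, $\beta$, $B_1, \ldots, B_4$, with surgery coefficients determined linearly by the five integers $(p, n_1, n_2, n_3, n_4)$. For $p \geq 1$ and $n_i \geq 6$, these coefficients should be large enough to apply a $6$-theorem-style argument (Agol--Lackenby) or a direct normal-surface analysis of $S^3 \smallsetminus L$ to rule out essential spheres and tori uniformly across the family. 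Non-Seifert-fiberedness can then be checked by computing $H_1(M_\mathbf{v})$ from the surgery presentation and comparing with the classification of Seifert fibered spaces having matching first homology, or by noting that the $\mathrm{JSJ}$ data is incompatible with a Seifert structure. Making the atoroidality argument uniform over the four-parameter family is the most delicate step, and the explicit bound $n_i \geq 6$ is presumably calibrated precisely to exclude exceptional parameter values where reducing spheres or essential tori could appear.
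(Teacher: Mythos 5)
Your proposal does not address the statement you were asked about. The statement is Theorem~\ref{Giroux 1-1}, the Giroux open book correspondence, which this paper does not prove at all---it is a foundational result of Giroux cited from \cite{Giroux_Correspondence} and used as a black box. What you have written is instead a proof sketch of Theorem~\ref{Th:hyperbolic_examples}, concerning the family $(S,\phi_\mathbf{v})$. There is no overlap between the two statements, so nothing in your argument engages with the Giroux correspondence (existence of supporting open books, uniqueness of the supported contact structure, or the equivalence up to positive stabilization).

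Since you did write a substantive attempt at Theorem~\ref{Th:hyperbolic_examples}, two of its steps diverge from the paper in ways that would not succeed. For tightness, you propose to cap off a binding component to reach a Stein-fillable contact manifold and then ``pull back'' universal tightness through admissible transverse surgery. Admissible transverse surgery does not work in that direction: the capped-off manifold is obtained \emph{from} $(M_\mathbf{v},\xi_\mathbf{v})$ by surgery, so tightness of the result tells you nothing about tightness of the original (indeed one can produce a tight manifold by surgery on an overtwisted one). The paper instead proves universal tightness directly via fractional Dehn twist coefficients: it shows $c(\phi_\mathbf{v},B_i)\geq n_i-1\geq 5$ using the Kazez--Roberts estimate (Theorem~\ref{Kazez_Roberts_estimate}) and then invokes Ito--Kawamuro (Theorem~\ref{FDTC > 1}) and Colin--Honda (Theorem~\ref{Colin_Honda_universally_tight}). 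For hyperbolicity, you propose a direct geometrization argument via a surgery description, atoroidality estimates, and homology computations; the paper avoids all of that by first showing $\phi_\mathbf{v}$ is pseudo-Anosov via Penner's criterion (Theorem~\ref{pseudo-Anosov}) and then applying Ito--Kawamuro's theorem (Theorem~\ref{Ito_Kawamuro_hyperbolic}), which upgrades pseudo-Anosov monodromy with all fractional Dehn twist coefficients exceeding $4$ directly to hyperbolicity of $M_\mathbf{v}$. Also note the paper caps off $B_2$, not $B_1$, to produce the overtwisted structure (so that $\alpha$ becomes parallel to $B_1$ and the twists $\tau_\alpha^{-n_1-1}\tau_{B_1}^{n_1}$ collapse to $\tau_{B_1}^{-1}$); capping off $B_1$ would leave $\tau_{B_2}^{-n_1-1}\tau_{B_2}^{n_2}$, which need not be negative.
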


\begin{remark}
Giroux actually defined open books on a manifold $M$ by embedded fibered knots that are null-homologous.  This gives an abstract open book by considered the compactification of the fibers of the fibration, along with the induced monodromy.  In this setting, Giroux proved that the correspondence was up to isotopy of contact structures on $M$.
\end{remark}

Given a contact manifold $(M,\xi)$, we define its \textit{support genus} to be the minimum of the genus $g(S)$ over all open book decompositions $(S,\phi)$ supporting $(M,\xi)$.  A contact structure is \textit{planar} if its support genus is zero.  The following result of Etnyre describes the support genus of overtwisted contact structures.

\begin{theorem}[Etnyre \cite{Etnyre_Overtwisted_Planar}]
\label{Etnyre OT planar}
If $(M,\xi)$ is an overtwisted contact manifold, then $\xi$ is planar.
\end{theorem}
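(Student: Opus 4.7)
The plan is to observe that for parameters $p \geq 1$ and $n_i \geq 6$, the open book $(S,\phi_\mathbf{v})$ falls within the family constructed in the proof of Theorem~\ref{Th:Main_Theorem} (by checking that capping off the binding component $B_1$ yields a supported overtwisted manifold), so that planarity, tightness, non-fillability, and vanishing of the Heegaard Floer contact invariant are supplied directly by that theorem. What remains is to strengthen tightness to universal tightness and to establish hyperbolicity of $M_\mathbf{v}$.

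For universal tightness I would verify that the pullback of $\xi_\mathbf{v}$ to any finite cover of $M_\mathbf{v}$ is tight, and then argue that the universal cover is tight by a limiting argument on these finite covers. The lift of $(S,\phi_\mathbf{v})$ along a finite cover $\widetilde{M}\to M_\mathbf{v}$ is again a planar open book whose monodromy has the same mixed-sign structure as $\phi_\mathbf{v}$, with negative twists along preimages of $\alpha$ balanced by positive twists along preimages of $\beta$ and along the boundary components. Capping off one preimage of $B_1$ and lifting the arc that obstructs right-veering downstairs produces, via Honda-Kazez-Mati\'{c}, an overtwisted open book on the cover; the capping-off / contact-invariant argument used in Theorem~\ref{Th:Main_Theorem} applied upstairs then yields tightness of the cover. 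I expect the principal obstacle to be coherently producing these arguments uniformly in all covers, and especially passing from tightness of every finite cover to tightness of the universal cover, since for hyperbolic $M_\mathbf{v}$ the universal cover is $\mathbb{H}^3$, on which open book techniques do not apply directly.

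For hyperbolicity, the inequalities $n_i \geq 6$ and $p \geq 1$ are tailored to the 6-theorem of Agol-Lackenby. I would realize $M_\mathbf{v}$ as a Dehn filling of the complement of a link $L \subset S^3$ formed from the binding of the open book together with pushed-off curves representing $\alpha$ and $\beta$ in the mapping torus. After verifying that $S^3 \setminus L$ is hyperbolic (for instance via a SnapPy computation, or by direct identification with a known cusped hyperbolic manifold), the filling slopes on the cusps corresponding to $B_i$, $\alpha$, and $\beta$ have lengths bounded below in terms of $n_i$, $n_1+1$, and $p$ respectively, up to a bounded correction coming from the cusp shape. The hypotheses then ensure that each filling slope has length exceeding $6$, so the 6-theorem produces hyperbolicity of $M_\mathbf{v}$.
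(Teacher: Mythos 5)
Your proposal does not address the statement at hand. The statement is Etnyre's theorem that every overtwisted contact manifold $(M,\xi)$ is planar, i.e.\ has support genus zero. In the paper this is a black-box result cited from \cite{Etnyre_Overtwisted_Planar}; the authors do not (and need not) prove it. Your proposal instead outlines an argument for Theorem~\ref{Th:hyperbolic_examples}, concerning the specific family $(S,\phi_\mathbf{v})$ being universally tight, non-fillable, with vanishing contact invariant, and hyperbolic. That is a different statement in the paper. Nothing in your write-up speaks to why an arbitrary overtwisted contact structure admits a genus-zero supporting open book.

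If one were to actually prove Etnyre's theorem, the route is entirely different from what you sketch: one first shows that the standard overtwisted contact structure on $S^3$ is supported by a planar open book, then uses the Lutz twist / contact surgery presentation of overtwisted contact structures together with careful control of how Legendrian surgeries interact with planar open books (the argument in \cite{Etnyre_Overtwisted_Planar} proceeds via a Legendrian surgery diagram on a planar page). None of the ingredients you invoke (fractional Dehn twist coefficients, the Agol--Lackenby $6$-theorem, Penner's pseudo-Anosov construction, Kazez--Roberts estimates) are relevant to establishing planarity of overtwisted contact structures. Separately, even as a proof of Theorem~\ref{Th:hyperbolic_examples} your proposal diverges from the paper: the paper obtains universal tightness directly from Colin--Honda (Theorem~\ref{Colin_Honda_universally_tight}) once the monodromy is shown to be pseudo-Anosov with fractional Dehn twist coefficients at least $2$, and hyperbolicity from Ito--Kawamuro (Theorem~\ref{Ito_Kawamuro_hyperbolic}) once those coefficients exceed $4$; there is no lifting-to-covers argument and no Dehn-filling/$6$-theorem argument. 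But the primary issue is that you proved the wrong statement.
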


Given an open book $(S,\phi)$ with binding components $B_1,\ldots, B_k$, $k >1$, Baldwin \cite{Baldwin_capping_off} defines an operation called \textit{capping off}.  Given a boundary component $B_i$, we create a new surface $S'$ by gluing $D^2$ to $S$ along $B_i$.  We create a monodromy map $\phi'$ for $S'$ by extending $\phi$ over the glued-in $D^2$ by the identity map.  This gives a new open book decomposition $(S',\phi')$.

\subsection{Fractional Dehn Twist Coefficients}
Let $(S,\phi)$ be an open book decomposition with binding components $B_1,\ldots,B_k$.  By a result of Thurston \cite{Thurston_Classification}, the monodromy $\phi$ is freely isotopic to a map $\Phi$ that falls into one of three categories: pseudo-Anosov, periodic, or reducible.  We will define the \textit{fractional Dehn twist coefficient} of $\phi$ at $B_i$ where $\Phi$ is periodic or reducible; the definition for pseudo-Anosov maps is more involved, and as we will not need it, we will omit it here.  See \cite[Section 4]{Ito_Kawamuro_essential} for more details.

\begin{definition}
If $\Phi$ is periodic, then $\Phi^n$ is isotopic to $\tau_{B_1}^{m_1}\tau_{B_2}^{m_2}\cdots\tau_{B_k}^{m_k}$ for some positive integer $n$, and integers $m_1, \ldots, m_k$, where $B_1, \ldots, B_k$ are isotopic to the boundary components of $S$.  We define the \textit{fractional Dehn twist coefficient} of $\phi$ at $B_i$ to be $c(\phi,B_i) = m_i/n.$

If $\Phi$ is reducible, then there exists some simple closed multicurve on $S$ that is preserved by $\Phi$.  In this case, there is some maximal subsurface $S'$ of $S$ containing $B_i$ on which $\Phi|_{S'}$ is well defined and is either pseudo-Anosov or periodic.  On this subsurface, $\Phi|_{S'}$ may permute some boundary components.  Take a large enough power $n$ of $\Phi|_{S'}$ such that all boundary components are fixed. We define the \textit{fractional Dehn twist coefficient} of $\phi$ at $B_i$ to be $1/n$ times the fractional Dehn twist coefficient of $\Phi|_{S'}^n$ at $B_i$.  We will only consider reducible monodromies that restrict to being periodic.
\end{definition}

We will not calculate the fractional Dehn twist coefficient of a pseudo-Anosov monodromy from the definition, but we will use the following estimate, due to Kazez and Roberts.  First, we give the following definition, which will make the theorem easier to state.  Given an open book $(S,\phi)$ and a boundary component $B$ of $S$, we say that $\phi$ is {\it right-veering} at $B$ if, for every properly embedded arc $\alpha$ with an endpoint on $B$, the image $\phi(\alpha)$ is either isotopic to $\alpha$ or to an arc right of $\alpha$ near $B$ with respect to the orientation of $S$, after isotoping the arcs to minimize intersections.

\begin{theorem}[Kazez--Roberts \cite{Kazez_Roberts}]
\label{Kazez_Roberts_estimate}
Let $(S,\phi)$ be an open book, and let $B$ be a boundary component of $S$ such that $\phi$ is right-veering at $B$.  Let $\alpha$ be an arc properly embedded in $S$, with at least one endpoint on $B$.  After isotoping $\alpha$ and $\phi(\alpha)$ to minimize intersections, let $i_{\phi}(\alpha)$ denote the signed count of intersections $p$ in the interiors of $\phi(\alpha)$ and $\alpha$ such that the union of the arc segments of $\alpha$ and $\phi(\alpha)$ from $B$ to $p$ are contained in an annular neighborhood of $B$. Then the fractional Dehn twist coefficient $c(\phi,B) \geq i_{\phi}(\alpha).$
\end{theorem}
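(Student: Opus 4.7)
The plan is to interpret the fractional Dehn twist coefficient as a normalized translation number of a canonical lift of $\phi$ on the universal cover $\tilde S$ of $S$, a viewpoint that unifies the periodic, reducible, and pseudo-Anosov cases. After equipping $S$ with a hyperbolic structure with geodesic boundary, fix a boundary lift $\tilde B$ of $B$ in $\tilde S$. There is a canonical lift $\tilde\phi$ of $\phi$ preserving $\tilde B$ setwise, and the restriction $\tilde\phi|_{\tilde B}$ is a translation whose normalized translation number---in units where a generator $g$ of $\mathrm{Stab}(\tilde B)$ translates by one---equals $c(\phi,B)$. One checks that this matches the two definitions given in the excerpt in the periodic and reducible cases, and serves as the definition in the pseudo-Anosov case.

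With this setup in place, lift $\alpha$ to an arc $\tilde\alpha \subset \tilde S$ with endpoint $x \in \tilde B$. The lifts of $\phi(\alpha)$ with an endpoint on $\tilde B$ are precisely $g^k \tilde\phi(\tilde\alpha)$ for $k \in \mathbb{Z}$, with endpoint $g^k \tilde\phi(x)$. A positively-signed intersection $p$ of $\phi(\alpha)$ with $\alpha$ such that both connecting subarcs from $B$ to $p$ lie in an annular neighborhood $A$ of $B$ lifts to an intersection of $\tilde\alpha$ with some $g^k \tilde\phi(\tilde\alpha)$, with the corresponding subarcs confined to a half-plane neighborhood $\tilde A$ of $\tilde B$. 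The right-veering hypothesis places $\tilde\phi(x)$ strictly on the positive side of $x$ along $\tilde B$, and each such positively-signed intersection forces $g^k(x)$ to lie strictly between $x$ and $\tilde\phi(x)$ for a distinct positive integer $k$. Counting gives $i_\phi(\alpha) \leq \lfloor c(\phi,B) \rfloor \leq c(\phi,B)$.

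The main technical obstacle is establishing the lifting correspondence rigorously: each positively-signed intersection in $A$ must produce a distinct positive integer $k$ with the connecting subarcs trapped in the corresponding half-plane $\tilde A$. This uses that $\pi_1(A) \hookrightarrow \pi_1(S)$ is injective onto $\langle g \rangle$, so that a subarc of $\phi(\alpha)$ starting at $B$ and confined to $A$ lifts, after an appropriate deck translation, to a subarc of $g^k\tilde\phi(\tilde\alpha)$ starting at a point of $\tilde B$ and confined to $\tilde A$. One must also verify that minimizing intersections downstairs corresponds to putting the lifts in efficient position upstairs, so that no two distinct intersections in $A$ give the same $k$. Once this local model is made precise, the estimate follows by counting the positive deck translates of $x$ lying between $x$ and $\tilde\phi(x)$ on $\tilde B$, which is at most $\lfloor c(\phi,B) \rfloor$.
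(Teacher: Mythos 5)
This theorem is cited from Kazez--Roberts; the paper contains no proof of its own, so there is nothing in-paper to compare against. On its own terms, your high-level strategy --- interpret $c(\phi,B)$ as a translation number upstairs in the universal cover and bound it below by an intersection count --- is the right circle of ideas and is close in spirit to the Kazez--Roberts/Honda--Kazez--Mati\'{c} framework.

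However, the foundation of your sketch is incorrect. You claim the canonical lift $\tilde\phi$ of $\phi$ preserving $\tilde B$ restricts to a translation of $\tilde B$ whose normalized translation number equals $c(\phi,B)$. Since $\phi$ fixes $B$ pointwise, $\tilde\phi|_{\tilde B}$ is a lift of $\mathrm{id}_{S^1}$ to $\mathbb{R}\cong\tilde B$, hence translation by an integer, and for the usual basepoint-fixing choice of lift it is literally the identity; yet $c(\phi,B)$ is in general a non-integer rational (periodic/reducible case) or irrational (pseudo-Anosov case). For instance $\phi=\tau_B^n$ has $c(\phi,B)=n$ while $\tilde\phi|_{\tilde B}=\mathrm{id}$. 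The fractional information is not visible in $\tilde\phi|_{\tilde B}$: it is encoded in the free isotopy from $\phi$ to its Nielsen--Thurston representative $\Phi$ (whose lift $\tilde\Phi$ need not fix $B$ pointwise and can translate $\tilde B$ by a non-integer amount), or equivalently in the induced action on a compactification of $\tilde S$ by ideal points. Your counting step bounds $i_\phi(\alpha)$ by the number of deck translates $g^k(x)$ lying strictly between $x$ and $\tilde\phi(x)$ on $\tilde B$; since $\tilde\phi(x)=x$ (or $x+$integer), that count bears no relation to $c(\phi,B)$, and for the basepoint-fixing lift it is simply zero. The lifting correspondence you flag as the ``main technical obstacle'' is indeed a real issue, but it is downstream of this more basic error: the translation-number interpretation must be set up on the correct object (the lift of $\Phi$, or the compactified universal cover) before the intersection count can mean anything.
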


Calculating fractional Dehn twist coefficients on a planar open book allows us to conclude facts about the supported contact structure, by the following result.

\begin{theorem}[Ito--Kawamuro \cite{Ito_Kawamuro_tight}]
\label{FDTC > 1}
If $(S,\phi)$ is an open book such that $S$ is a planar surface, and $c(\phi,B) > 1$ for every boundary component $B$ of $S$, then $(S,\phi)$ supports a tight contact structure.
\end{theorem}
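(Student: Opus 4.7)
I would argue by contradiction: suppose $(S,\phi)$ supports an overtwisted contact structure, and then extract a combinatorial witness on the page that forces $c(\phi, B) \leq 1$ at some boundary component $B$, contradicting the hypothesis.

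The first step is to use the Giroux correspondence, together with positive stabilization, to position a hypothetical overtwisted disk compatibly with (a stabilization of) the open book. Positive stabilization preserves both the FDTC at the original boundary components and the overtwisted/tight dichotomy, so it suffices to detect overtwistedness combinatorially on a suitable page. The second step is to extract from this overtwisted disk a properly embedded arc $\alpha$ on the page with an endpoint on some boundary component $B$ whose image $\phi(\alpha)$ sits ``to the left'' of $\alpha$ in a strong, quantitative sense near $B$. The third step is to exploit the relationship between this arc configuration and the FDTC: a careful signed count of the intersections of $\alpha$ and $\phi(\alpha)$ in an annular neighborhood of $B$ should yield an upper bound $c(\phi, B) \leq 1$, in direct analogy with (but opposite in direction to) the Kazez--Roberts lower bound of Theorem~\ref{Kazez_Roberts_estimate}, contradicting the hypothesis.

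The main obstacle is the second step. Mere right-veering (that is, $c(\phi, B) \geq 0$) is insufficient for tightness on a planar page: right-veering planar open books supporting overtwisted contact structures are known to exist. The strict inequality $c > 1$ must therefore be used essentially, not as slack over the right-veering threshold. A natural alternative route is to factor $\phi = \tau_{B_1}\tau_{B_2}\cdots\tau_{B_k}\cdot\psi$ where $c(\psi, B_i) > 0$ for each $i$, interpret the boundary-twist factor as a convex surgery operation that contributes a Stein-fillable (and hence tight) piece of the contact topology, and then upgrade the residual right-veering of $\psi$ to tightness of $(S,\phi)$ by combining this additional positivity with a criterion that rules out overtwisted disks meeting the relevant surgery tori. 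Either approach ultimately requires controlling how a full boundary-parallel twist interacts with arbitrary properly embedded arcs on the page, which I expect to be the technical heart of the argument.
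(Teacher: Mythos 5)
The paper does not prove this theorem; it is quoted from Ito and Kawamuro, so there is no internal proof to compare against. I will therefore assess your plan on its own terms and against the argument in the cited source.

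Your overall scheme --- contradiction via a hypothetical overtwisted disk, then a combinatorial obstruction on the page bounding the fractional Dehn twist coefficient --- is a reasonable conceptual shape, and you have honestly located the crux at step two. But the mechanism you propose there, extracting a single properly embedded arc $\alpha$ whose image $\phi(\alpha)$ lies ``strongly to the left'' near some boundary $B$, is exactly where the plan fails, and not merely for lack of detail. Honda, Kazez, and Mati\'{c} showed that every contact structure, including every overtwisted one, is supported by a right-veering open book, so an overtwisted disk does not hand you any leftward arc; and when $c(\phi,B)$ is nonnegative at every boundary there is no a priori single-arc witness even to $c(\phi,B) \leq 1$. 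The Kazez--Roberts estimate (Theorem~\ref{Kazez_Roberts_estimate}) is genuinely one-sided: it produces lower bounds on the FDTC from arcs, and there is no dual statement attaching an upper bound to a hypothetical overtwisted disk. Your use of positive stabilization is also delicate: stabilizing can merge or split boundary components and can raise the genus of the page, so it does not in general preserve planarity or ``the FDTC at the original boundary components'' in the sense your plan requires. The alternate route, factoring $\phi = \tau_{B_1}\cdots\tau_{B_k}\cdot\psi$ with $c(\psi,B_i)>0$, runs into the same wall: right-veering of $\psi$ on a planar page does not by itself preclude an overtwisted disk.

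What Ito and Kawamuro actually do is analyze the \emph{open book foliation} of a transverse overtwisted disk: a singular foliation on the disk, induced by its intersection with the pages, with elliptic and hyperbolic singularities, in the spirit of Birman--Menasco braid foliations. Planarity of the page is used to constrain the region decomposition of the foliated disk via Euler-characteristic counting, and the hypothesis $c(\phi,B)>1$ at every boundary component is shown to be incompatible with the resulting boundary data. This is a global analysis of a two-dimensional object and is not recoverable from the one-dimensional arc combinatorics your proposal sketches; the step you flag as the ``technical heart'' is in effect the entire theorem, and the arc-based shortcut you outline would not close the gap.
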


In addition, if $\phi$ is pseudo-Anosov, we can conclude that the supported contact structure is universally tight.  Although the result was originally stated for connected binding, Baldwin and Etnyre \cite{Baldwin_Etnyre_transverse} noted that it works for multiple binding components as well.  Note that the formulation here is simpler than the actual theorem proved by Colin and Honda, but is sufficient for our needs.  

\begin{theorem}[Colin--Honda \cite{Colin_Honda}]
\label{Colin_Honda_universally_tight}
Let $(S,\phi)$ be an open book with pseudo-Anosov monodromy $\phi $. If $c(\phi,B) \geq 2$ for every boundary component $B$ of $S$, then the contact manifold supported by $(S,\phi)$ is universally tight.
\end{theorem}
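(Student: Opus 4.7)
The plan is to establish the four conclusions --- universal tightness, non-fillability, vanishing Heegaard Floer contact invariant, and hyperbolicity --- by three mechanisms. Non-fillability and vanishing of the contact invariant follow from the capping-off strategy underlying Theorem~\ref{Th:Main_Theorem}; universal tightness follows from an FDTC computation combined with Theorem~\ref{Colin_Honda_universally_tight}; hyperbolicity requires a separate Thurston-style argument. The exponent $-n_1-1$ in the definition of $\phi_\mathbf{v}$ is chosen precisely to engineer the behavior under capping off $B_1$: after capping, the factor $\tau_{B_1}^{n_1}$ becomes trivial while $\tau_\alpha^{-n_1-1}$ survives as a large negative Dehn twist around a curve that remains essential and non-boundary-parallel in the capped surface.

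For vanishing of the contact invariant and non-fillability, cap off the binding component $B_1$ and verify from Figure~\ref{fig:Planar_Surface_With_the_Curves_Drawn} that $\alpha$ remains essential in the capped surface $S'$. Then exhibit an arc in $S'$ witnessing that the new monodromy $\phi_\mathbf{v}'$ fails to be right-veering (equivalently, produce a negative destabilization or apply a sobering arc criterion) to conclude that the capped open book supports an overtwisted contact structure. Baldwin's capping-off theorem then forces the vanishing of the Heegaard Floer contact invariant of $(M_\mathbf{v},\xi_\mathbf{v})$, and since capping off corresponds to admissible transverse surgery, which preserves weak symplectic fillability, the non-fillability of the overtwisted capped manifold forces the non-fillability of $(M_\mathbf{v},\xi_\mathbf{v})$.

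For universal tightness, first verify that $\phi_\mathbf{v}$ is freely isotopic to a pseudo-Anosov representative: rule out periodicity and reducibility by checking that the twist curves $\alpha,\beta,B_1,\ldots,B_4$ form a filling system on $S$ and that the signed exponents preclude any invariant essential multicurve. Then apply Theorem~\ref{Kazez_Roberts_estimate} at each boundary component. For $i\in\{2,3,4\}$, arcs based at $B_i$ that wind only within an annular neighborhood of $B_i$ produce a Kazez--Roberts count of order $n_i$, so $n_i\geq 6$ yields $c(\phi_\mathbf{v},B_i)\geq 2$. For $i=1$, the negative twist $\tau_\alpha^{-n_1-1}$ competes with the positive contributions from $\tau_\beta^p$ and $\tau_{B_1}^{n_1}$; the thresholds $p\geq 1$ and $n_1\geq 6$ are precisely what is needed to absorb the negative contributions and obtain $c(\phi_\mathbf{v},B_1)\geq 2$. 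Theorem~\ref{Colin_Honda_universally_tight} then gives universal tightness (and Theorem~\ref{FDTC > 1} independently reconfirms tightness).

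Finally, for hyperbolicity, the complement $M_\mathbf{v}^\circ$ of the binding fibers over $S^1$ with pseudo-Anosov monodromy $\phi_\mathbf{v}$ and is therefore hyperbolic by Thurston's fibered hyperbolization theorem. The closed manifold $M_\mathbf{v}$ is obtained from $M_\mathbf{v}^\circ$ by Dehn filling each cusp along the page-framing (meridian of the corresponding binding), and Thurston's hyperbolic Dehn surgery theorem leaves only finitely many exceptional slopes on each cusp. One would then argue quantitatively that the filling slopes corresponding to $p\geq 1$ and $n_i\geq 6$ lie outside the exceptional set --- for instance, by realizing $M_\mathbf{v}$ as Dehn surgery on a fixed hyperbolic link with surgery coefficients that grow with $\mathbf{v}$, and then appealing to hyperbolic Dehn surgery on that fixed link. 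The main obstacle is extracting the uniform explicit threshold $n_i\geq 6$ in both the FDTC computation at $B_1$ and in the hyperbolic-surgery step (as opposed to the weaker ``for sufficiently large $\mathbf{v}$''); a secondary technicality is the bookkeeping in Figure~\ref{fig:Planar_Surface_With_the_Curves_Drawn} needed to ensure that $\alpha$ indeed remains essential and non-boundary-parallel in $S'$ after capping off $B_1$.
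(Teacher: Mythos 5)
The statement you were asked to address is Theorem~\ref{Colin_Honda_universally_tight}, the Colin--Honda criterion: an open book with pseudo-Anosov monodromy and fractional Dehn twist coefficient at least $2$ at every boundary component supports a universally tight contact structure. This is a cited black-box result from the literature; the paper does not prove it, it merely imports it. Your proposal does not attempt to prove this statement at all --- it is a sketch of the proof of Theorem~\ref{Th:hyperbolic_examples}, the paper's construction of a specific family $(M_\mathbf{v},\xi_\mathbf{v})$. Indeed, your outline explicitly \emph{invokes} Theorem~\ref{Colin_Honda_universally_tight} as an input (``Theorem~\ref{Colin_Honda_universally_tight} then gives universal tightness''), so it cannot possibly be a proof of that theorem: the argument would be circular. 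Proving the Colin--Honda theorem would instead require the convex-surface and bypass machinery of Colin and Honda's paper, none of which appears in your sketch.

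Even read as a proof of Theorem~\ref{Th:hyperbolic_examples}, your sketch has two gaps relative to the paper's argument. First, you cap off $B_1$ and assert that $\alpha$ ``remains essential and non-boundary-parallel in the capped surface,'' but in Figure~\ref{fig:Planar_Surface_With_the_Curves_Drawn} the curve $\alpha$ encircles exactly $B_1$ and $B_2$; after capping $B_1$ it becomes parallel to $B_2$, and the net twist around $B_2$ is $\tau_{B_2}^{\,n_2-n_1-1}$, which is positive whenever $n_2 > n_1+1$, so overtwistedness need not follow. The paper instead caps off $B_2$, after which $\alpha$ becomes parallel to $B_1$ and the net twist around $B_1$ is $\tau_{B_1}^{\,n_1-n_1-1}=\tau_{B_1}^{-1}$, unconditionally a negative boundary twist, hence not right-veering and overtwisted for all admissible $\mathbf{v}$. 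Second, your hyperbolicity argument via Thurston's fibered hyperbolization plus hyperbolic Dehn filling only yields a ``for $\mathbf{v}$ sufficiently large'' conclusion, as you acknowledge; it does not give the uniform threshold $n_i \ge 6$. The paper avoids this by using Ito and Kawamuro's Theorem~\ref{Ito_Kawamuro_hyperbolic}, which gives an explicit criterion (fractional Dehn twist coefficients $> 4$ and pseudo-Anosov monodromy) that the Kazez--Roberts estimate $c(\phi_\mathbf{v},B_i)\ge n_i-1\ge 5$ satisfies directly.
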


\subsection{Hyperbolic Manifolds}

To prove Theorem~\ref{Th:hyperbolic_examples}, we wish to know how to construct hyperbolic manifolds.  We will use the following theorem of Ito and Kawamuro.

\begin{theorem}[Ito--Kawamuro \cite{Ito_Kawamuro_essential}]
\label{Ito_Kawamuro_hyperbolic}
If $(S,\phi)$ is an open book decomposition of $M$, and $c(\phi,B) > 4$ for every boundary component $B$ of $S$, then:
\begin{itemize}
\item $M$ is toroidal if and only if $\phi$ is reducible.
\item $M$ is hyperbolic if and only if $\phi$ is pseudo-Anosov.
\item $M$ is Seifert fibered if and only if $\phi$ is periodic.
\end{itemize}
\end{theorem}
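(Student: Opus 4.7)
The plan is to prove the trichotomy by combining the Nielsen--Thurston classification of surface diffeomorphisms with an analysis of essential surfaces in $M$ via the open book structure. Write $N = M \setminus \nu(\partial S)$ for the binding complement; this is a punctured surface bundle over $S^1$ (the mapping torus of $\phi$ on $S$), and $M$ is recovered from $N$ by Dehn filling each cusp torus along its meridian. I would handle the three lines of the theorem by establishing each ``if'' direction by direct construction and each ``only if'' direction via essential open book foliations.

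For the ``if'' directions, the constructions are as follows. If $\phi$ is periodic of order $n$, then $N$ admits a Seifert fibration from the suspension of the periodic action. On each cusp torus, the regular fiber has well-defined slope determined by $c(\phi,B)=m/n$; the bound $c(\phi,B)>4$ keeps this slope far from the meridian, so each meridian Dehn filling extends the Seifert fibration to a Seifert structure on $M$. If $\phi$ is reducible with $\phi$-invariant essential multicurve $\gamma\subset S$, then $\gamma\times S^1$ is a family of tori in $N$ disjoint from $\partial N$, and these extend to tori in $M$. One checks, using the FDTC bound at each binding component adjacent to a region of $S\setminus\gamma$, that the tori remain incompressible and not boundary-parallel, witnessing that $M$ is toroidal. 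If $\phi$ is pseudo-Anosov, Thurston gives that $N$ is hyperbolic; the FDTC bound should translate, on each cusp torus, into a lower bound on the normalized length of the meridian slope, so that an Agol--Lackenby--type filling theorem (the $6$--theorem, suitably adapted) yields hyperbolicity of $M$.

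For the ``only if'' directions, the main tool is the machinery of \emph{essential open book foliations} developed in the paper being cited. Any closed essential surface $\Sigma \subset M$ can be isotoped into essential position with respect to the open book, so that the pages induce on $\Sigma$ a singular foliation with controlled tangencies and $b$-arcs / $c$-circles. The FDTC $>4$ hypothesis forces strong combinatorial restrictions on these foliations: an essential torus in $M$ descends to a $\phi$-invariant essential multicurve on $S$, showing $\phi$ is reducible; a Seifert fibration of $M$ restricts to a $\phi$-invariant singular fibration on the page with finite-order holonomy, forcing $\phi$ to be periodic; and an essential sphere is ruled out by the same technique, showing $M$ is irreducible. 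The hyperbolic ``only if'' then follows from geometrization: when $\phi$ is pseudo-Anosov, $M$ is prime, atoroidal and not Seifert fibered, hence hyperbolic.

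The hard part, and the whole point of the Ito--Kawamuro framework, is the essential open book foliation analysis on a torus (or Seifert-fiber-compatible annulus): one must count elliptic and hyperbolic tangencies along arcs transverse to the binding and show that the hypothesis $c(\phi,B) > 4$ at every component makes the combinatorial data descend to a $\phi$-invariant multicurve (respectively, to finite-order holonomy). The numerical threshold $4$ enters precisely as the value beyond which these tangency counts become rigid, and translating the dynamical quantity $c(\phi,B)$ into a geometric length bound on the cusp (needed for the pseudo-Anosov ``if'' direction) is the other subtle point; both rely on the relationship, central to the Ito--Kawamuro paper, between $c(\phi,B)$ and the winding of essential surfaces around the binding.
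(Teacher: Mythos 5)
This theorem is stated in the paper as a cited result of Ito and Kawamuro, attributed to \cite{Ito_Kawamuro_essential}; the paper itself gives no proof of it, so there is no internal argument to compare your proposal against. What follows is therefore an assessment of your sketch on its own terms.

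The overall shape of your outline matches the Ito--Kawamuro framework: the genuinely hard directions (``$M$ toroidal $\Rightarrow \phi$ reducible,'' ``$M$ Seifert fibered $\Rightarrow \phi$ periodic,'' and irreducibility of $M$) are indeed obtained there by putting closed essential surfaces into essential position with respect to the open book and running a combinatorial analysis of the induced singular foliation, with the numerical hypothesis on $c(\phi,B)$ used to control the $b$-arcs and eliminate degenerate configurations. You are also right that, once those directions and irreducibility are in hand, the hyperbolic line of the trichotomy is a formal consequence of geometrization, so your final paragraph already recovers the pseudo-Anosov ``if'' direction cleanly. The weakest part of the proposal is the first pass at that same direction via the 6--theorem: there is no general implication from $c(\phi,B)>4$ to a lower bound on the normalized length of the meridian on the cusp of the mapping torus, and invoking Agol--Lackenby here would require exactly the kind of FDTC-to-cusp-geometry dictionary that you flag as ``the other subtle point'' without supplying it. Since that route is also unnecessary given geometrization, you should drop it. Two smaller imprecisions: in the periodic ``if'' direction what you actually need is that the meridian slope differ from the Seifert fiber slope, for which $c(\phi,B)\neq 0$ already suffices (the stronger $>4$ bound is doing work in the converse directions, not here); and in the reducible case the suspension tori of $\gamma$ in the mapping torus are not literally $\gamma\times S^1$, and since $M$ is closed the relevant obstruction is compressibility (e.g.\ bounding a solid torus), not boundary-parallelism. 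As you say yourself, the core combinatorial lemma --- that $c(\phi,B)>4$ forces the open book foliation on an essential torus to degenerate to a $\phi$-invariant multicurve --- is the content of the theorem, and your proposal states it as a goal rather than proves it; so this is a correct high-level reading of the result rather than a proof.
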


To use Theorem~\ref{Ito_Kawamuro_hyperbolic}, we will construct pseudo-Anosov monodromies, using the following construction of Penner, based on work of Thurston.

\begin{theorem}[Penner \cite{Penner}]
\label{pseudo-Anosov}
Let $\Gamma_1$ and $\Gamma_2$ be two multicurves which fill a surface $S$ ({\it ie.\ }after being put in minimal position, their complement is a collection of discs and annuli), and let $\phi$ be a product of positive Dehn twists on the elements of $\Gamma_1$ and negative Dehn twists on the elements of $\Gamma_2$, where each curve in $\Gamma_1 \cup \Gamma_2$ appears at least once with non-zero exponent in $\phi$.  Then $\phi$ is pseudo-Anosov.
\end{theorem}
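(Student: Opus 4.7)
The plan is to follow Penner's original argument, which constructs an invariant pair of transverse measured laminations (equivalently, measured foliations) stretched and contracted by a common factor $\lambda>1$. The central object is a train track $\tau$ built from $\Gamma_1 \cup \Gamma_2$: at each intersection point of a curve $a_i \in \Gamma_1$ with a curve $b_j \in \Gamma_2$, smooth the crossing so that $a_i$ and $b_j$ become tangent, yielding a branched $1$-submanifold of $S$ with four cusped ``bigon'' complementary regions at each former crossing. The filling hypothesis on $\Gamma_1 \cup \Gamma_2$ guarantees that every other complementary region of $\tau$ is a disk (or once-punctured disk at the boundary), which is what one needs for the transverse measures on $\tau$ to parametrize measured laminations rather than stray into annular components.

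The next step is to analyze how the generators act on $\tau$. I would verify that a positive Dehn twist $\tau_{a_i}$ along a curve in $\Gamma_1$ takes $\tau$ to a train track carried by $\tau$, and similarly for a negative Dehn twist $\tau_{b_j}^{-1}$ along a curve in $\Gamma_2$; the sign convention is exactly what is needed so that each such twist smooths in the ``correct'' direction against the bigon cusps. Each generator then induces a non-negative integer transition matrix on the space of branch weights of $\tau$, and by the chain rule the full $\phi$ induces a non-negative matrix $M(\phi)$ obtained as an ordered product of these elementary matrices. The hypothesis that each curve of $\Gamma_1 \cup \Gamma_2$ appears with non-zero exponent means every elementary factor actually contributes, and combined with the filling property one argues that $M(\phi)$ (or some fixed power of it) is strictly positive, hence primitive.

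With primitivity in hand, Perron--Frobenius yields a simple dominant eigenvalue $\lambda > 1$ with a strictly positive eigenvector $w$; interpreting $w$ as a transverse measure on $\tau$ gives a measured lamination $\mathcal{L}^u$ with $\phi_*\mathcal{L}^u = \lambda \, \mathcal{L}^u$. Running the same construction with the roles of $\Gamma_1$ and $\Gamma_2$ (and the smoothings) reversed, or equivalently applying the argument to $\phi^{-1}$, produces a transverse measured lamination $\mathcal{L}^s$ with $\phi_*\mathcal{L}^s = \lambda^{-1}\mathcal{L}^s$. Because the original multicurves fill and the smoothings are transverse at every former crossing, $\mathcal{L}^u$ and $\mathcal{L}^s$ themselves fill $S$ and meet transversely, so they assemble into a transverse pair of measured foliations that certify $\phi$ is pseudo-Anosov with dilatation $\lambda$.

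The main obstacle in carrying out this plan cleanly is the combinatorial step of proving that $M(\phi)$ is Perron--Frobenius (i.e.\ primitive, not merely irreducible). Non-negativity of each elementary factor is immediate, but showing a power of the product is strictly positive requires carefully tracking which branch weights are mixed together by each Dehn twist; here one uses the filling condition to argue that starting from any branch of $\tau$, repeated application of the generators spreads weight across every other branch. Once this mixing lemma is in place, the rest of the argument is a standard application of Perron--Frobenius plus the train-track dictionary between positive eigenvectors and measured laminations.
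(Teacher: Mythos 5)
The paper does not supply a proof of this statement: it is a foundational result quoted from Penner's article and used as a black box, so there is no in-paper argument for you to match. Your sketch is a reasonable account of Penner's original argument via bigon train tracks, transition matrices, and Perron--Frobenius theory, and the overall structure (smooth $\Gamma_1\cup\Gamma_2$ into an invariant bigon track, show each generating twist is carried with a non-negative integer transition matrix, prove the word's matrix is primitive using the filling hypothesis, extract stable/unstable measured laminations from the positive eigenvectors of $M(\phi)$ and $M(\phi^{-1})$) is the correct one.

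Two small points worth tightening. First, the local picture at a smoothed crossing is not ``four cusped bigon regions'': smoothing a single transverse intersection of an $a$-curve and a $b$-curve in the direction compatible with the twist signs produces a single bigon between the two branches (two cusps), and it is precisely because these bigons are unavoidable that Penner works with \emph{bigon tracks} rather than genuine train tracks; one has to check separately that bigon tracks still have the needed correspondence between positive weight vectors and measured laminations. Second, you get a priori two dilatations $\lambda$ (from $\phi$) and $\mu$ (from $\phi^{-1}$), and the identification $\mu = \lambda$ is not automatic from the eigenvector construction --- it follows from the standard area/Markov-partition argument once you know $\mathcal{L}^u$ and $\mathcal{L}^s$ are transverse and filling, so that step should be cited or argued rather than asserted. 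With those caveats your outline matches Penner's published proof.
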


\subsection{Fillability}

When discussing $4$-manifold fillings $X$ of a $3$-manifold $M$, we will always assume that $\partial X = M$ with the same orientation.  For contact manifolds $(M,\xi)$, there are three main types of fillings: \textit{weak fillings}, \textit{strong fillings}, and \textit{Stein fillings}.

\begin{definition}
A contact manifold $(M,\xi)$ is \textit{weakly fillable} if $M$ is the oriented boundary of a symplectic manifold $(X,\omega)$ and $\omega|_{\xi} > 0$. 

It is \textit{strongly fillable} if in addition, there is a vector field $v$ pointing transversely out of $X$ along $M$ such that $\xi$ is isotopic to $\iota_v \omega$ and $\mathcal{L}_v \omega = \omega$.
\end{definition}

To define Stein fillability, we first define Stein domains.

\begin{definition}
A \textit{Stein domain} $(X,J,f)$ is an open complex manifold $(X,J)$ with a proper strictly plurisubharmonic function $f:X \rightarrow [0,\infty)$. In this case, $d(df \circ J) $ defines a symplectic form on $X$.  A contact manifold $(M,\xi)$ is \textit{Stein fillable} if $M$ is the regular level set of $f$ for some Stein domain $(X,J,f)$ and $\xi$ is isotopic to $TM \cap J(TM)$.
\end{definition}

If $(M,\xi)$ is Stein fillable, then $(M,\xi)$ is strongly fillable, and if $(M,\xi)$ is strongly fillable, then it is weakly fillable.  For planar contact manifolds, Niederkr\"{u}ger and Wendl have shown that the reverse implications hold.

\begin{theorem}[Niederkr\"{u}ger--Wendl \cite{Niederkruger_Wendl_Planar}]
\label{Niederkruger_Wendl}
If $(M,\xi)$ is a planar contact manifold, then the following are equivalent.
\begin{itemize}
\item $(M,\xi)$ is weakly fillable.
\item $(M,\xi)$ is strongly fillable.
\item $(M,\xi)$ is Stein fillable.
\end{itemize}
\end{theorem}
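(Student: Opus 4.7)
The chain of implications $\text{Stein} \Rightarrow \text{strong} \Rightarrow \text{weak}$ is immediate from the definitions: the Liouville vector field $\nabla_{g_J} f$ of a Stein domain $(X,J,f)$ is outward-transverse to any regular level set of $f$, so $\omega_J = -d(df\circ J)$ is automatically a strong filling form there; and a strong filling satisfies $\omega|_\xi > 0$ by construction, since $\xi$ is the kernel of the Liouville one-form $\iota_v\omega$ restricted to $M$. The content of the theorem is therefore the reverse implication $\text{weak} \Rightarrow \text{Stein}$ in the planar case.

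I would factor this through $\text{weak} \Rightarrow \text{strong} \Rightarrow \text{Stein}$. The step $\text{strong} \Rightarrow \text{Stein}$ in the planar case is an application of Wendl's earlier structure theorem for strong fillings of planar contact manifolds: every such filling $(W,\omega)$ is symplectically deformation-equivalent to a Lefschetz fibration over $D^2$ whose regular fibers are copies of the planar page $S$, with boundary monodromy factored as a product of positive Dehn twists recovering $\phi$ up to positive stabilization. The construction of Loi--Piergallini and Akbulut--Ozbagci then upgrades such a positive Lefschetz fibration over $D^2$ to a Stein filling of $(M,\xi)$ with underlying smooth manifold $W$.

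For the implication $\text{weak} \Rightarrow \text{strong}$, the strategy is to deform the weak symplectic form $\omega$ in a collar of $\partial W$ until it matches the canonical symplectization form. Choose an $\omega$-tame almost complex structure $J$ for which, in a collar of $\partial W$, the pages of the planar open book $(S,\phi)$ lift to asymptotically cylindrical $J$-holomorphic curves. The heart of the argument is to propagate these near-boundary pages inward and produce a $J$-holomorphic foliation of $W$ with generic leaf diffeomorphic to $S$ and finitely many nodal leaves. Once the foliation is in place, one modifies $\omega$ in a collar to be adapted to the fibration and Liouville near $\partial W$, producing a strong filling in the same smooth deformation class.

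The main obstacle is the construction of the $J$-holomorphic foliation in the weak setting. Unlike in the strong case, one has no Liouville vector field near $\partial W$ to anchor the geometry, so Gromov compactness for the moduli of boundary pages must be re-established directly: one must rule out sphere bubbling (using the fact that $\omega$ evaluates controllably on the fiber class) and verify that limiting curves remain honest pages rather than breaking into multi-level SFT-type configurations. This is exactly where the planarity hypothesis enters in an essential way, since positivity of intersection and vanishing of the relevant Fredholm index obstructions are genuinely stronger for genus-zero fibers than for higher-genus ones.
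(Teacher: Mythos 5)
This is a cited result; the paper quotes it from Niederkr\"{u}ger--Wendl and gives no proof, so there is no in-paper argument to compare against. Evaluating your sketch on its own terms against the actual Niederkr\"{u}ger--Wendl argument:

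Your outline correctly identifies the essential ingredients: the construction of a $J$-holomorphic foliation of the filling by planar pages, the Lefschetz fibration structure this produces, and the Loi--Piergallini / Akbulut--Ozbagci construction of a Stein structure on a positive allowable Lefschetz fibration over $D^2$. You are also right that the planarity hypothesis is used to control the index theory and rule out degenerations. However, the factoring through an intermediate ``weak $\Rightarrow$ strong'' step is not how Niederkr\"{u}ger--Wendl proceed, and as organized it would duplicate the hardest work. Wendl's earlier structure theorem for strong fillings, which you cite as a black box for ``strong $\Rightarrow$ Stein,'' is itself proved via the very same holomorphic foliation construction you propose to use for ``weak $\Rightarrow$ strong.'' The Niederkr\"{u}ger--Wendl paper instead runs the foliation argument once, directly on the weak filling with a suitably tame $J$, and extracts the Lefschetz fibration from there; the passage to Stein is then a consequence, not a separate Wendl-theorem application.

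There is also a genuine missing step: the foliation on a weak filling can have leaves containing closed spherical components, which appear as $(-1)$-spheres in reducible Lefschetz fibers. The actual conclusion of Niederkr\"{u}ger--Wendl is therefore that a weak filling of a planar contact manifold is deformation equivalent to a \emph{blow-up of} a Stein filling; one must blow down exceptional spheres before the PALF yields a Stein domain. For the theorem as stated this is harmless at the level of fillability of $(M,\xi)$, but a proof must account for it, since the Stein filling one produces is generally not the original weak filling and may differ topologically. Separately, your parenthetical claim to produce ``a strong filling in the same smooth deformation class'' by modifying $\omega$ in a collar is delicate precisely for this reason: in general one cannot promote a given weak filling to a strong filling of the same underlying manifold without first removing exceptional spheres, and there are cohomological obstructions to exactness of $\omega$ near $\partial W$ when $b_1(M) > 0$ that Ohta--Ono-type arguments do not resolve outside the rational-homology-sphere case.
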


Eliashberg \cite{Eliashberg} and Gromov \cite{Gromov} showed that if $(M,\xi)$ is fillable, then $\xi$ is tight.

\subsection{Heegaard Floer contact invariant}

Another effective way to detect tightness is with tools from Heegaard Floer homology, defined by Ozsv\'{a}th and Szab\'{o} \cite{HF2,HF1}. We briefly recall the construction here. For any $3$-manifold $M$, Ozsv\'{a}th and Szab\'{o} defined a set of invariants, the simplest of which is the so-called \textit{hat} theory, which takes the form of a vector space $\widehat{HF}(M)$ over $\mathbb{Z}/2\mathbb{Z}$.  Given a contact structure $\xi$ on $M$, Ozsv\'{a}th and Szab\'{o} associate an element $c(\xi) \in \widehat{HF}(-M)$, see \cite{HF_contact_invariant}.  We call $c(\xi)$ the \textit{Heegaard Floer contact element}.

Ozsv\'{a}th and Szab\'{o} proved the following properties of the contact element.

\begin{itemize}
\item If $(M,\xi)$ is overtwisted, then $c(\xi) = 0$.
\item If $(M,\xi)$ is Stein fillable, then $c(\xi) \neq 0$.
\item If $(M',\xi')$ is obtained from $(M,\xi)$ by Legendrian surgery, \textit{ie.\ }surgery on a Legendrian knot with framing one less than the contact framing, then $c(\xi) \neq 0$ implies that $c(\xi') \neq 0$.
\end{itemize}

Theorem~\ref{Niederkruger_Wendl} allows us to conclude that in the case of planar contact manifolds, any of three fillability conditions (weak, strong, or Stein) implies that $c(\xi) \neq 0$.

Baldwin tracks the contact invariant under the capping off operation, and concludes the following.

\begin{theorem}[Baldwin~\cite{Baldwin_capping_off}]
\label{HF_capping_off}
Let $(S,\phi)$ be an open book decomposition of $(M,\xi)$, where $\partial S$ has more than one component.  Let $(S',\phi')$ be the open book obtained by capping off a boundary component of $S$, and let $(M',\xi')$ be the contact manifold it supports.  If $c(\xi) \neq 0$, then $c(\xi') \neq 0$.
\end{theorem}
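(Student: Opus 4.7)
The strategy I would use is to realize the capping off as a 2-handle cobordism and combine the Ozsv\'ath--Szab\'o surgery exact triangle with the functoriality of the contact invariant. Topologically, capping off the binding component $B$ corresponds to the $0$-surgery on $B$ relative to the page framing, which for a binding component equals the Seifert framing (since the page itself is a Seifert surface for $B$). This produces a 4-dimensional cobordism $W \colon M \to M'$ obtained by attaching a 2-handle to $M \times [0,1]$ along $B$ with the page framing.

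To bring in a surgery triangle, I would introduce a third manifold $M''$ obtained from $M$ by $(+1)$-surgery on $B$ with respect to the page framing. Equip $M''$ with a contact structure $\xi''$ via $(+1)$-contact surgery on a Legendrian approximation $L$ of $B$ sitting on a page, so that $tb(L)$ equals the page framing of $B$ and the topological surgery coefficient matches. Since any $(+1)$-contact surgery produces an overtwisted contact manifold, $c(\xi'') = 0$. Ozsv\'ath--Szab\'o's surgery exact triangle then yields a long exact sequence
\begin{equation*}
\widehat{HF}(-M'') \xrightarrow{F_3} \widehat{HF}(-M) \xrightarrow{F_1} \widehat{HF}(-M') \xrightarrow{F_2} \widehat{HF}(-M''),
\end{equation*}
in which $F_1$ is induced by the capping-off cobordism $W$.

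The central step is to show that $F_1(c(\xi)) = c(\xi')$ and that this image is nonzero. For the first part, I would use Heegaard diagrams adapted to the three open books in the style of Honda--Kazez--Mati\'c: choose a compatible cut system in which one arc straddles $B$, build Heegaard diagrams from the doubled pages, and observe that the three diagrams differ only in local modifications near the $\alpha$- and $\beta$-curves associated with this arc. The triangle maps can then be computed at the chain level via explicit holomorphic triangle counts, and the contact invariant cycles for $(S, \phi)$ and $(S', \phi')$ can be tracked through these local models. For nonvanishing of $F_1(c(\xi))$, the vanishing $c(\xi'') = 0$ combined with a $\mathrm{spin}^c$-structure or grading analysis should show that $c(\xi)$ cannot lie in $\operatorname{Im}(F_3) = \ker(F_1)$.

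The main obstacle is this chain-level identification of cobordism maps with maps on contact invariants. Making the triangle maps concrete enough to track the Honda--Kazez--Mati\'c representative of the contact invariant requires careful choice of Heegaard data, compatibility with the monodromies of all three open books, and some delicate holomorphic-triangle analysis; this is where the bulk of the work lies.
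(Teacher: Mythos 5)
The paper does not prove this statement; it cites Baldwin's paper \cite{Baldwin_capping_off} as a black box, so there is no in-paper argument to compare against. Measured against Baldwin's actual proof, your ingredients (the $2$-handle cobordism $W$ from capping off, the Ozsv\'ath--Szab\'o surgery triangle, and Honda--Kazez--Mati\'c Heegaard diagrams) are the right ones, but the crucial map runs in the wrong direction, and this creates a genuine gap. Baldwin constructs a map $F \colon \widehat{HF}(-M') \to \widehat{HF}(-M)$, i.e.\ from the \emph{capped-off} manifold to the \emph{original} one, and proves $F(c(\xi')) = c(\xi)$. With the map in that direction the theorem is immediate: if $c(\xi)\neq 0$ then its preimage $c(\xi')$ cannot vanish, and no nonvanishing-of-image argument is needed. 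Your proposed $F_1 \colon \widehat{HF}(-M) \to \widehat{HF}(-M')$ with $F_1(c(\xi))=c(\xi')$ requires a second, unprovided ingredient, namely that $c(\xi)$ is not killed by $F_1$ whenever it is nonzero, and that is exactly where the argument stalls.

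Your plan for supplying that nonvanishing has two concrete defects. First, the claim that $(+1)$-contact surgery always produces an overtwisted manifold is false: $(+1)$-contact surgery on the $tb=-1$ Legendrian unknot in $(S^3,\xi_{std})$ yields the Stein-fillable $(S^1\times S^2,\xi_{std})$, whose contact invariant is nonzero. So $c(\xi'')=0$ cannot be asserted from the surgery description alone. Second, and more fundamentally, even granting $c(\xi'')=0$, that is a fact about one element of the \emph{domain} of $F_3\colon \widehat{HF}(-M'')\to\widehat{HF}(-M)$, and gives no control whatsoever over $\operatorname{Im}(F_3)=\ker(F_1)$; knowing that $F_3$ sends one particular class to zero does not prevent its image from containing $c(\xi)$. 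No grading or $\mathrm{spin}^c$ argument is actually indicated that would rule this out. Reformulating the target as $F(c(\xi'))=c(\xi)$ for the reverse cobordism map, as Baldwin does, sidesteps this issue entirely and is where the Honda--Kazez--Mati\'c diagram tracking should actually be aimed.
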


\subsection{Transverse Surgery}
Given a framed transverse knot $K$ in a contact manifold $(M,\xi)$, a neighborhood of $K$ is contactomorphic to a standard neighborhood, \textit{ie.\ }the solid torus neighborhood $S_{r_0} = \{r \leq r_0\}$ of the $z$-axis in $\mathbb{R}^3/(z \sim z+1)$ with contact structure $\ker(\cos r\,dz + r\sin r\,\textrm{d}\theta)$, where $K$ gets mapped to the $z$-axis.  The boundary $\partial S_{r_0}$ has a characteristic foliation given by parallel linear leaves, of slope $-\cot r_0/r_0$ in the co-ordinate system on $\partial S_{r_0}$ given by $\left(\frac{\partial}{\partial z},\frac{\partial}{\partial\theta}\right)$.  The following exposition of surgery on transverse knots is due to Baldwin and Etnyre \cite{Baldwin_Etnyre_transverse}, based off of work of Martinet \cite{Martinet}, Lutz \cite{Lutz1, Lutz2}, and Gay \cite{Gay_transverse}.

\begin{definition} Given a neighborhood $N$ of $K$ contactomorphic to $S_{r_0}$, let $0 \leq r_1 < r_0$ be such that $-\cot r_1/r_1$ is a rational number $p/q$ ($q$ may be $0$).  Removing the interior of $S_{r_1}$ from $S_{r_0}$, we are left with a manifold where the characteristic foliation on the boundary is by curves of slope $p/q$.  We quotient $\partial (M \backslash S_{r_1})$ by the $S^1$-action of translation along the leaves to get $M'$.  It can be shown that this is a manifold, and that the contact structure on $M\backslash S_{r_1}$ descends to $M'$ in a well-defined manner.  We define \textit{admissible transverse $p/q$-surgery} on $K$ to be $(M',\xi')$, where $\xi'$ is the induced contact structure.  Notice that $M'$ is topologically $p/q$-surgery on $K$, with respect to the framing of $K$.
\end{definition}

\begin{remark} In general, this construction will depend on the choice of neighborhood of $K$.  In addition, there are infinitely many $r_1$ corresponding to the rational number $p/q$, although a given neighborhood of $K$ will contain only finitely many of them.  In all cases under discussion here, the neighborhood used and the particular choice of $r_1$ will be clear from the situation.
\end{remark}

A transverse knot $K$ in the binding of an open book $(S,\phi)$ has a framing induced by the page, called the \textit{page framing}. Baldwin and Etnyre show \cite{Baldwin_Etnyre_transverse} that if $K$ is not the only boundary component of $S$, then there exists a standard neighborhood of $K$, where the slope of the characteristic foliation on the boundary is $\epsilon > 0$, measured with respect to the page framing.  They then show that capping off an open book is an admissible transverse surgery on the binding component being capped off.  Note that the proof in \cite{Baldwin_Etnyre_transverse} extends to allow non-intersecting neighborhoods of multiple boundary components.  The advantage of this extension is that now capping off multiple boundary components can be seen as admissible surgery on a link.

\begin{proposition}[Baldwin--Etnyre \cite{Baldwin_Etnyre_transverse}]
\label{prop:capping_off}
Let $(S,\phi)$ be an open book with binding components $B_1, \ldots, B_k$, where $k \geq 2$.  Then there exist pairwise disjoint standard neighborhoods $N_1, \ldots, N_{k-1}$ of $B_1, \ldots, B_{k-1}$, where $N_i$ is contactomorphic to $S_{\epsilon_i}$ for some $\epsilon_i > 0$, with respect to the page framing.
\end{proposition}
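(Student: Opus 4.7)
The plan is to reduce the proposition to the single-binding-component version already established by Baldwin and Etnyre in \cite{Baldwin_Etnyre_transverse}, by observing that their construction of a standard neighborhood of a single binding component is entirely local.

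First I would recall the model form near the binding. In the standard Thurston--Winkelnkemper-type contact form supported by the open book $(S,\phi)$, each binding component $B_i$ has an open collar neighborhood $C_i$ in $M$ on which there are coordinates $(r,\theta,z)$ with $r\in[0,\rho_i)$, $z$ an $S^1$-coordinate along $B_i$, and $\theta$ the angular coordinate running over the pages meeting $B_i$, in which the contact form is a positive rescaling of $\cos r\,dz + r\sin r\,d\theta$, with $B_i=\{r=0\}$ and the $z$-direction giving the page framing. Since the binding components are pairwise disjoint, we may shrink the $\rho_i$ so that the collars $C_1,\dots,C_{k-1}$ are pairwise disjoint and all disjoint from $B_k$.

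Second, I would invoke the Baldwin--Etnyre argument for a single binding component, which shows that provided $B_i$ is not the only boundary component of $S$, one can isotope within $C_i$ to produce a standard neighborhood $N_i\subset C_i$ of $B_i$ contactomorphic to $S_{\epsilon_i}$ for some $\epsilon_i>0$ measured with respect to the page framing. The key feature I need from their proof is its \emph{locality}: the isotopy only modifies things inside the collar $C_i$, using only the model form there, so running the same argument in each $C_i$ independently gives the required neighborhoods $N_1,\dots,N_{k-1}$ simultaneously. Disjointness of the $N_i$ is automatic since $N_i\subset C_i$ and the $C_i$ are pairwise disjoint.

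The only real thing to check is this locality claim. I would verify it by rereading the Baldwin--Etnyre construction and noting that every step takes place inside a single collar of the chosen binding component: the push-off that realizes the boundary slope $\epsilon_i$, and the identification of the resulting solid torus with $S_{\epsilon_i}$, both depend only on the local model contact form, not on the global monodromy or the other binding components. Once that observation is made explicit, the $(k-1)$-fold version stated in Proposition~\ref{prop:capping_off} follows immediately from $k-1$ independent applications of the one-component case, and no compatibility condition among the $N_i$ needs to be imposed.
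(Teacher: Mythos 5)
Your argument is exactly the one the paper has in mind: the paper simply notes that the Baldwin--Etnyre single-component construction ``extends to allow non-intersecting neighbourhoods of multiple boundary components,'' and your expanded justification via locality of the collar model (disjoint collars, independent applications) is precisely the reason that note is valid. Correct, and the same approach.
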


\section{Proofs of results}
\label{Section:Proofs}

We start with a construction that will be used throughout the proofs of our examples.

\begin{construction}\label{construction}
Let $(S,\phi)$ be an open book, and let $B$ be a boundary component of $S$, shown on the left side of Figure~\ref{fig:construction}.  We attach the surface shown on the right side of Figure~\ref{fig:construction}, by identifying the boundary components labeled $B$. The new surface $S'$ has replaced $B$ with $m$ boundary components, for some integer $m \geq 2$. We label the new boundary components by $B'_1,\ldots, B'_m$. The new open book decomposition has monodromy $\phi' = \phi \circ \tau_B^{-n_m}\tau_{B'_1}^{n_1}\cdots \tau_{B'_m}^{n_m}$, where $\phi$ is extended by the identity to be defined on all of $S'$. Note that after capping off the boundary components $B'_1, \ldots, B'_{m-1}$ we get the original open book decomposition $(S,\phi)$.  See Figure~\ref{fig:construction} for a graphical representation of the construction.
\end{construction}

\begin{figure}[htb]
\begin{overpic}
{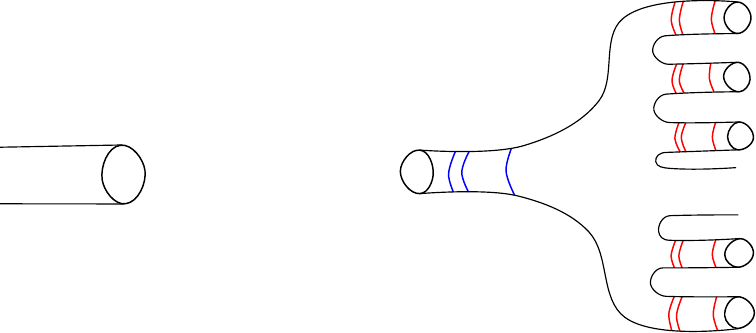}
\put(330,163){$n_1$}
\put(363,148){$B'_1$}
\put(330,134){$n_2$}
\put(363,119){$B'_2$}
\put(330,104){$n_3$}
\put(364,90){$B'_3$}
\put(325,48){$n_{m-1}$}
\put(364,34){$B'_{m-1}$}
\put(330,20){$n_{m}$}
\put(365,6){$B'_m$}
\put(230,92){$n_{m}$}
\put(80,75){$B$}
\put(180,75){$B$}
\put(227,77){\dots}
\put(329,150){\dots}
\put(328.5,121){\dots}
\put(329.5,93){\dots}
\put(329.3,37){\dots}
\put(329,8){\dots}
\put(350,64){$\vdots$}
\end{overpic}
\caption{Construction~\ref{construction} on a boundary component $B$.  The red curves on the right are positive Dehn twists, and the blue curves on the left, parallel to $B$, are negative Dehn twists.}
\label{fig:construction}
\end{figure}

\begin{lemma}
\label{lemma:fractional_dehn_twist_coefficient}
The fractional Dehn twist coefficient $c(\phi',B'_i) = n_i$ for each $i = 1, \ldots, m$.
\end{lemma}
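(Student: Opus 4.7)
The plan is to identify $B$ as a reducing curve for $\phi'$, and then read off the fractional Dehn twist coefficient at $B'_i$ directly from an explicit formula for $\phi'$ restricted to the newly attached subsurface.

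After the identification in Construction~\ref{construction}, $B$ is an essential simple closed curve in the interior of $S'$ that separates $S'$ into the original piece $S$ and the newly attached planar piece $S''$ whose boundary components are $B, B'_1, \ldots, B'_m$. By isotoping the support of $\tau_B^{-n_m}$ into $S''$, every Dehn twist appearing in $\phi'$ is supported in $S''$ along a curve parallel to a boundary component of $S''$. Since $\phi$ extends by the identity on $S''$, the curve $B$ is fixed pointwise by $\phi'$, and the restriction is
\[
\phi'|_{S''} \;=\; \tau_B^{-n_m}\,\tau_{B'_1}^{n_1}\cdots\tau_{B'_m}^{n_m}.
\]
Because $S''$ is a sphere with $m+1$ holes and each factor is a boundary-parallel Dehn twist, this restriction is trivial in the free mapping class group of $S''$. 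Hence its Nielsen--Thurston canonical form is periodic of period one and fixes every boundary component, so $B$ is a reducing curve for $\phi'$ and $S''$ is the maximal subsurface on which $\phi'$ is periodic and which contains $B'_i$.

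I then unwind the two parts of the definition in sequence. The reducible-case definition, applied with no power needed to fix boundary components, gives $c(\phi', B'_i) = c(\phi'|_{S''}, B'_i)$. The periodic-case definition applied to $\phi'|_{S''}$ already presents $(\phi'|_{S''})^1$ as a product of boundary-parallel Dehn twists with exponent $n_i$ on $\tau_{B'_i}$, so $c(\phi'|_{S''}, B'_i) = n_i/1 = n_i$. Combining the two steps yields $c(\phi', B'_i) = n_i$, as desired. The only delicate item is the verification that every Dehn twist appearing in the restriction is along a curve isotopic to a boundary component of $S''$; this is immediate from the construction, but it is exactly what makes the free isotopy class of $\phi'|_{S''}$ trivial and so allows the periodic case of the definition to be applied directly.
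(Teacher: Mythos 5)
Your proof is correct and takes essentially the same approach as the paper: identify $B$ as the reducing curve, restrict $\phi'$ to the newly attached subsurface $S''$, observe that the restriction is a product of boundary-parallel Dehn twists and hence periodic, and read off the fractional Dehn twist coefficient at $B'_i$ as the exponent $n_i$. Your write-up is somewhat more explicit than the paper's (e.g.\ you write out the restriction $\phi'|_{S''}=\tau_B^{-n_m}\tau_{B'_1}^{n_1}\cdots\tau_{B'_m}^{n_m}$ and note that the $\tau_B^{-n_m}$ factor is boundary-parallel inside $S''$, so it contributes only to the coefficient at $B$ and not at $B'_i$), but the underlying argument is identical.
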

\begin{proof}
Let $\gamma$ be a simple closed curve on $S'$ isotopic to $B$.  Then $\gamma$ is fixed by $\phi'$.  Thus $\phi'$ is a reducible monodromy, and to calculate the fractional Dehn twist coefficient at $B'_i$, we need to find the maximal subsurface $S''$ of $S'$ containing $B'_i$ on which $\phi'$ restricts to a pseudo-Anosov or periodic monodromy.  This subsurface can be identified with the surface added to $S$ to make $S'$, namely, the right-hand side of Figure~\ref{fig:construction}. Any larger subsurface would contain $B$ as a non-boundary-parallel curve, and the monodromy restricted to the larger subsurface would be reducible.

Now, restricted to $S''$, $\phi'$ is periodic, and is in fact already isotopic to a product of boundary Dehn twists.  Thus, the count of Dehn twists at each boundary component is the fractional Dehn twist coefficient, and the lemma follows.
\end{proof}

We now prove Theorem~\ref{Th:Main_Theorem}, which is a corollary of Construction~\ref{construction}.

\begin{proof}[Proof of Theorem~\ref{Th:Main_Theorem}]
Let $(M,\xi)$ be an overtwisted contact manifold. By Theorem~\ref{Etnyre OT planar}, $(M,\xi)$ is supported by a planar open book decomposition $(S,\phi)$.  Apply Construction~\ref{construction} to each boundary component of $S$, with each $m \geq 2$ and each $n_i \geq 2$, to get another planar open book decomposition $(S',\phi')$.  Denote the contact manifold supported by this open book decomposition by $(M',\xi')$.  Since the fractional Dehn twists coefficient $c(\phi',B') \geq 2$ for each boundary component $B'$ of $S'$ by Lemma~\ref{lemma:fractional_dehn_twist_coefficient}, Theorem~\ref{FDTC > 1} implies that $\xi'$ is tight.  Note that after capping off all but one boundary component on each added surface, we recover $(S,\phi)$. Since $(S,\phi)$ is overtwisted, the Heegaard Floer contact invariant $c(\xi) = 0$, so Theorem~\ref{HF_capping_off} implies that the contact invariant $c(\xi') = 0$.  Since $S'$ is planar, Theorem~\ref{Niederkruger_Wendl} implies that $(M',\xi')$ is not weakly, strongly, or Stein fillable.  Let $L$ be the transverse link given by the boundary components of $S'$ capped off to recover $(S,\phi)$.  Proposition~\ref{prop:capping_off} gives us a neighborhood of $L$ such that we can realize the capping off operation as admissible surgery on each component of the link.  Thus, admissible surgery on $L$ in $(M',\xi')$ will recover $(M,\xi)$.  Finally, note that we have infinitely many choices of $(M',\xi')$, as for each boundary component of $S'$, we have infinitely many choices of $m$ and $n_i$.
\end{proof}

We now turn to the proof of Theorem~\ref{Th:hyperbolic_examples}.

\begin{figure}[htb]
\begin{overpic}[scale=.8,tics=20]
{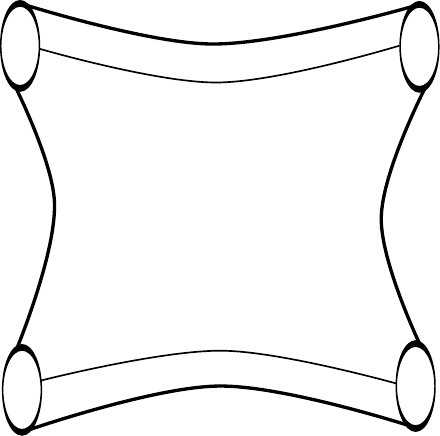}
\put(80,125){$\gamma_1$}
\put(80,40){$\gamma_2$}
\put(-15,150){$B_1$}
\put(174,150){$B_2$}
\put(-15,15){$B_4$}
\put(174,15){$B_3$}
\end{overpic}
\caption{$S$ with $\gamma_1$ and $\gamma_2$ indicated.}
\label{fig:arcs_on_surface}
\end{figure}

\begin{proof}[Proof of Theorem~\ref{Th:hyperbolic_examples}]
First note that if $\Gamma_1=\{\alpha\}$ and $\Gamma_2 = \{\beta\}$, where $\alpha$ and $\beta$ are as in Figure~\ref{fig:Planar_Surface_With_the_Curves_Drawn}, then $\Gamma_1$ and $\Gamma_2$ are multicurves that fill the surface $S$.  Thus by Theorem~\ref{pseudo-Anosov}, the diffeomorphism $\phi' = \tau_\alpha^{-n_1-1} \tau_\beta^p$ is pseudo-Anosov, as $p \geq 1$ and $n_1 \geq 6$.  Similarly, composing $\phi'$ with a product of boundary parallel Dehn twists does not change the free isotopy class of $\phi'$, thus $\phi_{\mathbf{v}}$ is also pseudo-Anosov.  Let $\gamma_1$ and $\gamma_2$ be the arcs shown in Figure~\ref{fig:arcs_on_surface}.  One can check that $i_{\phi_{\mathbf{v}}}(\gamma_1)$ near $B_1$ and $B_2$ is $n_1 - 1$ and $n_2 - 1$ respectively, and similarly for $i_{\phi_{\mathbf{v}}}(\gamma_2)$ near $B_3$ and $B_4$.  Thus, by Theorem~\ref{Kazez_Roberts_estimate}, the fractional Dehn twist coefficient $c(\phi,B_i) \geq n_i - 1 \geq 5 > 4$.  By Theorem~\ref{Ito_Kawamuro_hyperbolic}, the contact manifold $(M_\mathbf{v},\xi_{\mathbf{v}})$ supported by $(S,\phi_{\mathbf{v}})$ is hyperbolic.  By Theorem~\ref{Colin_Honda_universally_tight}, $\xi_{\mathbf{v}}$ is universally tight.  Note that capping off $B_2$ results in an open book supporting an overtwisted contact structure, as the monodromy around $B_1$ consists of a negative Dehn twist.  Thus, similarly to the proof of Theorem~\ref{Th:Main_Theorem}, $(M_{\mathbf{v}},\xi_{\mathbf{v}})$ has $c(\xi_{\mathbf{v}}) = 0$ and is not fillable, by Theorem~\ref{HF_capping_off} and Theorem~\ref{Niederkruger_Wendl}.

We now want to show that there are infinitely many distinct examples as we range over various values of $\mathbf{v}$. To do this, we compute the order of the group $H_1(M_{\mathbf{v}};\mathbb{Z})$ and show that it attains infinitely many values.

\begin{figure}[htb]
\begin{overpic}[scale=5,tics=20]
{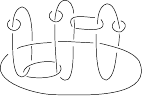}
\put(78,180){$0$}
\put(179,207){$0$}
\put(281,200){$0$}
\put(4,178){$B_2$}
\put(3,160){$-1$}
\put(103,190){$B_3$}
\put(103,175){$-1$}
\put(300,178){$B_4$}
\put(305,160){$-1$}
\put(200,6){$B_1$}
\put(170,5){$-1$}
\put(200,193){$\alpha$}
\put(200,178){$1$}
\put(100,85){$\beta$}
\put(100,70){$-1$}
\end{overpic}
\caption{The knots involved in a surgery diagram for $M_{\mathbf{v}}$.}
\label{fig:kirby_diagram}
\end{figure}

To get a surgery diagram for $M_{\mathbf{v}}$, we use the following fact: a simple closed curve on the page of an open book corresponds to a knot in the $3$-manifold described by the open book, and a right-handed (resp.\ left-handed) Dehn twist about this curve corresponds to a surgery on the knot with framing $-1$ (resp.\ $1$), see \cite[Theorem~5.7]{Etnyre_Open_Book_Decomposition}.

With this information, we describe a surgery diagram for $M_{\mathbf{v}}$ using Figure~\ref{fig:kirby_diagram}. The $0$-framed unknots come from treating the page as a disc bounded by $B_1$ with three $1$-handles attached.  The surgeries on the knots labeled $B_1, \ldots, B_4$ correspond to the boundary-parallel Dehn twists in the monodromy; take $n_i$ copies of $B_i$, and do surgery with framing $-1$ on each. The knots $\alpha$ and $\beta$ correspond to the $\alpha$ and $\beta$ curves on the page of the open-book (see Figure~\ref{fig:Planar_Surface_With_the_Curves_Drawn}); take $n_1+1$ copies of $\alpha$ and $p$ copies of $\beta$, and do surgery with framing $1$ on the copies of $\alpha$ and with framing $-1$ on the copies of $\beta$. All copies are push-offs using the Seifert framing of the knot in $S^3$, and the surgery framings are with respect to this same Seifert framing.

Now, after blowing down the $\pm 1$ framed components, we arrive at a surgery diagram which is surgery on a three-component link, and where the linking matrix is given by
$$
L_{\mathbf{v}}=\left(
\begin{array}{ccc}
n_1+n_2+p  & n_1+p  & n_1  \\
n_1+p  & n_3+p-1  & -1  \\
n_1  &  -1 & n_4-1  
\end{array}
\right).
$$
The order of $H_1(M_{\mathbf{v}};\mathbb{Z})$ is given by the absolute value of the determinant of the linking matrix $L_{\mathbf{v}}$.  Indeed, if we let $$\mathbf{v}_k = (6,11,6,6,24+k),$$ the order of $H_1(M_{\mathbf{v}_k};\mathbb{Z})$ is $k$ (where $k=0$ means that $H_1(M_{\mathbf{v}_0};\mathbb{Z})$ is infinite).
\end{proof}

\bibliography{bibliography}

\begin{thebibliography}{10}

\bibitem{Baldwin_capping_off}
John~A. Baldwin.
\newblock Capping off open books and the {O}zsv{\'a}th-{S}zab{\'o} contact
  invariant.
\newblock {\em J. Symplectic Geom.}, 11(4):525--561, 2013.

\bibitem{Baldwin_Etnyre_transverse}
John~A. Baldwin and John~B. Etnyre.
\newblock Admissible transverse surgery does not preserve tightness.
\newblock {\em Math. Ann.}, 357(2):441--468, 2013.

\bibitem{BVV}
John~A. Baldwin and David~Shea Vela-Vick.
\newblock A refinement of the {H}eegaard {F}loer contact invariant.
\newblock {\em \textit{In preparation}}.

\bibitem{Colin_Honda}
Vincent Colin and Ko~Honda.
\newblock Reeb vector fields and open book decompositions.
\newblock {\em J. Eur. Math. Soc. (JEMS)}, 15(2):443--507, 2013.

\bibitem{Eliashberg}
Yakov Eliashberg.
\newblock Filling by holomorphic discs and its applications.
\newblock In {\em Geometry of low-dimensional manifolds, 2 ({D}urham, 1989)},
  volume 151 of {\em London Math. Soc. Lecture Note Ser.}, pages 45--67.
  Cambridge Univ. Press, Cambridge, 1990.

\bibitem{Etnyre_Overtwisted_Planar}
John~B. Etnyre.
\newblock Planar open book decompositions and contact structures.
\newblock {\em Int. Math. Res. Not.}, (79):4255--4267, 2004.

\bibitem{Etnyre_Open_Book_Decomposition}
John~B. Etnyre.
\newblock Lectures on open book decompositions and contact structures.
\newblock In {\em Floer homology, gauge theory, and low-dimensional topology},
  volume~5 of {\em Clay Math. Proc.}, pages 103--141. Amer. Math. Soc.,
  Providence, RI, 2006.

\bibitem{Etnyre_Honda_nonfillable}
John~B. Etnyre and Ko~Honda.
\newblock Tight contact structures with no symplectic fillings.
\newblock {\em Invent. Math.}, 148(3):609--626, 2002.

\bibitem{Gay_transverse}
David~T. Gay.
\newblock Symplectic 2-handles and transverse links.
\newblock {\em Trans. Amer. Math. Soc.}, 354(3):1027--1047 (electronic), 2002.

\bibitem{Geiges_Contact_Geometry_Book}
Hansj{{\"o}}rg Geiges.
\newblock {\em An introduction to contact topology}, volume 109 of {\em
  Cambridge Studies in Advanced Mathematics}.
\newblock Cambridge University Press, Cambridge, 2008.

\bibitem{Ghiggini2}
Paolo Ghiggini.
\newblock Infinitely many universally tight contact manifolds with trivial
  {O}zsv{\'a}th-{S}zab{\'o} contact invariants.
\newblock {\em Geom. Topol.}, 10:335--357 (electronic), 2006.

\bibitem{Ghiggini1}
Paolo Ghiggini.
\newblock Ozsv{\'a}th-{S}zab{\'o} invariants and fillability of contact
  structures.
\newblock {\em Math. Z.}, 253(1):159--175, 2006.

\bibitem{GLS}
Paolo Ghiggini, Paolo Lisca, and Andr{{\'a}}s~I. Stipsicz.
\newblock Tight contact structures on some small {S}eifert fibered 3-manifolds.
\newblock {\em Amer. J. Math.}, 129(5):1403--1447, 2007.

\bibitem{Giroux_Correspondence}
Emmanuel Giroux.
\newblock G{{\'e}}om{{\'e}}trie de contact: de la dimension trois vers les
  dimensions sup{{\'e}}rieures.
\newblock In {\em Proceedings of the {I}nternational {C}ongress of
  {M}athematicians, {V}ol. {II} ({B}eijing, 2002)}, pages 405--414. Higher Ed.
  Press, Beijing, 2002.

\bibitem{Gromov}
M.~Gromov.
\newblock Pseudoholomorphic curves in symplectic manifolds.
\newblock {\em Invent. Math.}, 82(2):307--347, 1985.

\bibitem{HKM_contact_invariant}
Ko~Honda, William~H. Kazez, and Gordana Mati{{\'c}}.
\newblock On the contact class in {H}eegaard {F}loer homology.
\newblock {\em J. Differential Geom.}, 83(2):289--311, 2009.

\bibitem{Ito_Kawamuro_essential}
Tetsuya Ito and Keiko Kawamuro.
\newblock Essential open book foliation and fractional {D}ehn twist
  coefficient.
\newblock arXiv:1208.1559.

\bibitem{Ito_Kawamuro_tight}
Tetsuya Ito and Keiko Kawamuro.
\newblock Overtwisted discs in planar open books.
\newblock arXiv:1401.3039.

\bibitem{Kazez_Roberts}
William~H. Kazez and Rachel Roberts.
\newblock Fractional dehn twists in knot theory and contact topology.
\newblock arXiv:1201.5290.

\bibitem{KMVHMW2}
Cagatay Kutluhan, Gordana Mati\'{c}, Jeremy~Van Horn-Morris, and Andy Wand.
\newblock Filtering the heegaard floer contact invariant.
\newblock arXiv:1603.02673.

\bibitem{LS3}
Paolo Lisca and Andr{{\'a}}s~I. Stipsicz.
\newblock Ozsv{\'a}th-{S}zab{\'o} invariants and tight contact 3-manifolds.
  {III}.
\newblock {\em J. Symplectic Geom.}, 5(4):357--384, 2007.

\bibitem{LS2}
Paolo Lisca and Andr{{\'a}}s~I. Stipsicz.
\newblock Ozsv{\'a}th-{S}zab{\'o} invariants and tight contact three-manifolds.
  {II}.
\newblock {\em J. Differential Geom.}, 75(1):109--141, 2007.

\bibitem{Lutz1}
Robert Lutz.
\newblock Sur l'{E}xistence de {C}ertaines {F}ormes {D}iff{{\'e}}rentielles
  {R}emarquables sur la {S}ph{{\`e}}re {$S^{3}$}.
\newblock {\em C. R. Acad. Sci. Paris S{\'e}r. A-B}, 270:A1597--A1599, 1970.

\bibitem{Lutz2}
Robert Lutz.
\newblock Structures de {C}ontact sur les {F}ibr{{\'e}}s {P}rincipaux en
  {C}ercles de {D}imension {T}rois.
\newblock {\em Ann. Inst. Fourier (Grenoble)}, 27(3):ix, 1--15, 1977.

\bibitem{Martinet}
J.~Martinet.
\newblock Formes de {C}ontact sur les {V}ari{\'e}t{\'e}s de {D}imension {$3$}.
\newblock In {\em Proceedings of {L}iverpool {S}ingularities {S}ymposium, {II}
  (1969/1970)}, number 209 in Lecture Notes in Math., pages 142--163. Springer,
  Berlin, 1971.

\bibitem{Niederkruger_Wendl_Planar}
Klaus Niederkr{{\"u}}ger and Chris Wendl.
\newblock Weak symplectic fillings and holomorphic curves.
\newblock {\em Ann. Sci. {\'E}c. Norm. Sup{\'e}r. (4)}, 44(5):801--853, 2011.

\bibitem{HF2}
Peter Ozsv{{\'a}}th and Zolt{{\'a}}n Szab{{\'o}}.
\newblock Holomorphic disks and three-manifold invariants: properties and
  applications.
\newblock {\em Ann. of Math. (2)}, 159(3):1159--1245, 2004.

\bibitem{HF1}
Peter Ozsv{{\'a}}th and Zolt{{\'a}}n Szab{{\'o}}.
\newblock Holomorphic disks and topological invariants for closed
  three-manifolds.
\newblock {\em Ann. of Math. (2)}, 159(3):1027--1158, 2004.

\bibitem{HF_contact_invariant}
Peter Ozsv{{\'a}}th and Zolt{{\'a}}n Szab{{\'o}}.
\newblock Heegaard {F}loer homology and contact structures.
\newblock {\em Duke Math. J.}, 129(1):39--61, 2005.

\bibitem{Penner}
Robert~C. Penner.
\newblock A construction of pseudo-{A}nosov homeomorphisms.
\newblock {\em Trans. Amer. Math. Soc.}, 310(1):179--197, 1988.

\bibitem{Thurston_Classification}
William~P. Thurston.
\newblock On the geometry and dynamics of diffeomorphisms of surfaces.
\newblock {\em Bull. Amer. Math. Soc. (N.S.)}, 19(2):417--431, 1988.

\end{thebibliography}
\bibliographystyle{plain}

\end{document}